\newtheorem{theorem}{Theorem}[section]
\newtheorem{lemma}[theorem]{Lemma}
\theoremstyle{definition}
\theoremstyle{remark}
\newtheorem{remark}[theorem]{Remark}
\numberwithin{equation}{section}
\begin{document}
\renewcommand{\theequation}{\thesection.\arabic{equation}}
\makeatletter
\@namedef{subjclassname@2020}{%
	\textup{2020} Mathematics Subject Classification}
\makeatother

	\author{Fengjiang Li}
	\address[Fengjiang Li]{ School of Mathematical Sciences\\ East China Normal University\\
		Shanghai, 200241}
	\email{lianyisky@163.com}


\title[VSS with harmonic curvature]
{Vacuum static spaces with harmonic curvature}

\thanks{}

\keywords{vacuum static space, harmonic curvature, Codazzi tensor, $D$-fat}

\subjclass[2020]{Primary 53C21; Secondary 53C25, 83C20}

\begin{abstract}
In this paper, we classify $n$-dimensional ($n\geq 5$) vacuum static spaces with harmonic curvature, thus extending the $4$-dimensional work by Kim-Shin \cite{KS}. As a consequence, we provide new counterexamples to the Fischer-Marsden conjecture on compact vacuum static spaces.
\end{abstract}

\maketitle

\setcounter{section}{0}
\setcounter{equation}{0}

\section{Introduction}
An $n$-dimensional Riemannian manifold  $(M^n, g)$ 
 is said to be a {\it vacuum static space} 
 if there exists a nonzero smooth (lapse) function $f$ on $M$ such that
\begin{equation} \label{static}
Hess f = f(Rc -\frac{R}{n-1} g).
\end{equation}
Vacuum static metrics arise naturally in the study of static space-times in general
relativity.
Actually, let $(M,g)$ be an $n$-dimensional Riemannian manifold and $f$ be a nonzero smooth positive function on $M$.
It can be shown that the Lorentzian manifold $(\mathbb R \times M, -f^2dt^2+g )$ satisfies the Einstein equation for the energy momentum tensor $T$ of a perfect fluid if and only if $(M,g,f)$ is a vacuum static space satisfying \eqref{static}, (cf. \cite{Kobayashi-Obata,I,Co,Wald84}).

It is very interesting to notice that the vacuum static equation \eqref{static} is also considered by Fischer and Marsden \cite{FM} in their study of the surjectivity of scalar curvature function on the space of Riemannian metrics, originally derived from the linearization of the scalar curvature equation (cf. \cite{Bo,FM,KS}).
Moreover, Bourguignon \cite{Bo} and Fischer-Marsden \cite{FM} independently proved that a complete vacuum static space has constant scalar curvature, which is necessarily non-negative if further $M$ is compact.
Based on this fact, Fischer-Marsden \cite{FM} made the following conjecture.   

\medskip
\noindent \textbf{Fischer-Marsden Conjecture.}
\emph{Any compact vacuum static space is an Einstein space.}

\medskip
If it is true, by Obata's theorem \cite{Ob}, such a space must be a standard sphere or a Ricci flat space.
However, it turns out the conjecture is not true.
The first counterexample was provided by Kobayashi \cite{Ko} and Lafontaine \cite{La} independently. They proved that compact locally conformally flat vacuum static spaces are ${\mathbb S}^n$, ${\mathbb S}^{1} \times {\mathbb S}^{n-1}$ and certain warped product $ {\mathbb S}^{1} \times_h {\mathbb S}^{n-1}$.	
Moreover, Kobayashi-Obata \cite{Kobayashi-Obata} showed that 
if a complete vacuum static metric $(M^n, g,f)$ is locally conformally flat, then it is isometric to a warped product $I\times_h N^{n-1}$ of an open interval $I \subset {\mathbb R}$ with $N$ of constant sectional curvature. 
Furthermore, 
Kobayashi \cite{Ko} constructed five important examples of such warped products and ultimately gave the classification of complete locally conformally flat vacuum static spaces.
Later, Qing-Yuan \cite{QY} classified complete Bach-flat vacuum static spaces with compact level sets. 
Inspired by the work of Cao-Chen \cite{CC,CC2}, they defined a covariant $3$-tensor $D$ for vacuum static spaces. In fact, what they obtained is essentially the classification result of D-flat vacuum static spaces; see Section \ref{prevss} for more details. For more results, we refer to \cite{Be,Co,CEM,CS,S,YN} and the references therein.

\smallskip
Recently, Kim-Shin \cite{KS} studied the $4$-dimensional vacuum static space of harmonic curvature and obtained a local description of the metric and potential function.
Their method of proof was motivated by Kim's work \cite{Kim} 
on the classification of $4$-dimensional gradient Ricci solitons with harmonic Weyl curvature.
In a very recent paper \cite{L}, the author has succeeded in extending the work of Kim \cite{Kim} and classified $n$-dimensional gradient Ricci solitons with harmonic Weyl curvature for all $n\ge 5$.

In this paper, in a way similar to that of gradient Ricci solitons \cite{L}, we will study $n$-dimensional ($n\geq 5$) vacuum static space with harmonic curvature satisfying \eqref{static}, and extend the work of Kim-Shin \cite{KS} to all dimensions $n\ge 5$. Our main result is the following

\begin{theorem} \label{complete}	
Let $(M^n, g,f )$ be an $n$-dimensional, $n\geq5$, complete vacuum static space with harmonic curvature satisfying \eqref{static}.
Then it is one of the following types:
	
\smallskip
{\rm (i)} $(M,g)$ is $D$-flat.  Consequently, by Qing-Yuan \cite{QY}, $(M,g)$ is either Einstein or isometric to one of the spaces in Examples 1-5 as described in Section \ref{prevss}.

\smallskip
{\rm (ii)}
$(M,g)$ is isometric to a quotient of $\mathbb{S}^2\left(\frac{R}{2(n-1)}\right) \times   N^{n-2} $ with $R>0$, 
where $\left(N^{n-2}, {g_2}\right)$ is Einstein with positive Einstein constant $\frac{R}{n-1}$.
$f=c_1\cos\left( \sqrt{\frac{R}{2(n-1)}}s \right)$ for some constant $c_1\neq0$, 
where $s$ is the distance on $\mathbb{S}^2(\frac{R}{2(n-1)})$ from a point.
		
\smallskip
{\rm (iii)} $(M,g)$ is isometric  to a quotient of 
$ \mathbb{H}^2\left(\frac{R}{2(n-1)}\right) \times  N^{n-2}$ with $R<0$, 
where  
$\left(N^{n-2}, {g_2}\right)$ is Einstein with negative Einstein constant $\frac{R}{n-1}$.
$f=c_2\cosh\left( \sqrt{-\frac{R}{2(n-1)}}s \right)$ for some constant $c_2\neq0$, 
where $s$ is the distance function on $ \mathbb{H}^2\left(\frac{R}{2(n-1)}\right)$ from a point.

\smallskip
{\rm (iv)} $(M,g)$ is isometric to a quotient of the Riemannian product $(W^{r}\times N^{n-r}, g=\bar{g}+g_2)$, where $3 \leq r\leq n-1$, 
$( W^{r},\bar{g}=ds^2+h^2(s)\tilde{g})$ is an $r$-dimensional $D$-flat vacuum static space
and $ \left( N^{n-r},g_2\right) $ is an Einstein manifold of Einstein constant $\frac{R}{n-1}$. $f=ch'$ for some constant $c$.
Particularly, $R=0$ for $r= n-1$.
\end{theorem}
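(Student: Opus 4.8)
The plan is to work on the open dense set where $\nabla f\neq0$, prove there that $\nabla f$ is a Ricci eigenvector, bootstrap this to a local Riemannian-product normal form, and then globalize using completeness and match against the $D$-flat classification.

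\smallskip\noindent\textbf{Step 1 (consequences of harmonic curvature; the key identity).}
Since the divergence of the curvature tensor vanishes, so does the divergence of $Rc$; hence by the contracted second Bianchi identity $R$ is constant (already known for complete vacuum static spaces), and $Rc$ is a Codazzi tensor, $\nabla_iR_{jk}=\nabla_jR_{ik}$. Equivalently the Cotton tensor vanishes, so by the identity relating the tensor $D$, the Cotton tensor and the Weyl tensor recalled in Section \ref{prevss}, one has $D\equiv0$ if and only if the radial Weyl curvature $W(\,\cdot\,,\,\cdot\,,\,\cdot\,,\nabla f)$ vanishes. Differentiating \eqref{static}, cancelling the $f\,\nabla Rc$ terms via Codazzi, and invoking the Ricci identity gives the key identity
\begin{equation}\label{keyid}
-R_{ijkl}\nabla^{l}f=\nabla_{i}f\Bigl(R_{jk}-\tfrac{R}{n-1}g_{jk}\Bigr)-\nabla_{j}f\Bigl(R_{ik}-\tfrac{R}{n-1}g_{ik}\Bigr).
\end{equation}
If $f$ is constant on an open set, \eqref{static} makes $(M,g)$ Einstein, i.e.\ case (i); otherwise $\Omega=\{\nabla f\neq0\}$ is open and dense, and \eqref{static} forces the critical points of $f$ to be isolated and nondegenerate. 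If $D\equiv0$, Qing--Yuan's \cite{QY} classification of $D$-flat vacuum static spaces again gives case (i). So henceforth I would work on a component of $\Omega$ on which $D\neq0$, i.e.\ $W(\,\cdot\,,\,\cdot\,,\,\cdot\,,\nabla f)\neq0$.

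\smallskip\noindent\textbf{Step 2 ($\nabla f$ is a Ricci eigenvector; the warped normal form).}
Contracting \eqref{keyid} with $\nabla^{j}f$ and using the pair symmetry $R_{ijkl}=R_{klij}$, the left-hand side is symmetric in $i,k$; this forces $\nabla_if\cdot\bigl[(Rc-\tfrac{R}{n-1}g)(\nabla f)\bigr]_k$ to be symmetric in $i,k$, and since $\nabla f\neq0$ one obtains $Rc(\nabla f)=\rho\,\nabla f$ for a smooth function $\rho$. Differentiating $\abs{\nabla f}^{2}$ and using \eqref{static} together with $Rc(\nabla f)\parallel\nabla f$ then shows $\abs{\nabla f}^{2}$ is a function of $f$ alone on the component; choosing arc length $t$ along $\nabla f/\abs{\nabla f}$ one writes $g=dt^{2}+g_{t}$ with $f=f(t)$ and $f'=\abs{\nabla f}>0$, and the second fundamental form of the level set $\Sigma_t=\{f=f(t)\}$ is $\tfrac{f}{f'}\bigl(Rc|_{\Sigma_t}-\tfrac{R}{n-1}g_t\bigr)$.

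\smallskip\noindent\textbf{Step 3 (the local splitting --- the main obstacle).}
The crux is to upgrade this to a local Riemannian product $g=\bigl(ds^{2}+h(s)^{2}\tilde g\bigr)+g_{2}$ with $\tilde g$ of constant sectional curvature, $(N^{n-r},g_2)$ Einstein of Einstein constant $\tfrac{R}{n-1}$, and $f=c\,h'$. Feeding the normal form of Step 2 into the Codazzi equation for $Rc$ and the Gauss--Codazzi relations yields a coupled ODE system in $t$ for the eigenvalues and eigendistributions of $Rc|_{\Sigma_t}$ on $\Sigma_t$; using that $R$ is constant together with \eqref{keyid}, I expect to show that $Rc|_{\Sigma_t}$ has at most two eigenvalues with $t$-independent, $g_t$-parallel eigendistributions, and that the block carrying the nontrivial part of $\operatorname{Hess} f$, together with $\partial_t$, integrates to a warped factor $ds^{2}+h^{2}\tilde g$, while the complementary distribution integrates to a factor $g_2$; the de Rham decomposition then yields the local product. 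Substituting back into \eqref{static} with $R$ constant forces $(N^{n-r},g_2)$ to be Einstein with the stated constant and $(W^{r},ds^{2}+h^{2}\tilde g,h')$ to be an $r$-dimensional vacuum static space; and since $D\neq0$ here while the radial Weyl curvature of any single warped product over an Einstein base vanishes identically, the factor $N$ is genuine, so $r\le n-1$, and since $M$, hence each factor, has harmonic curvature, $\tilde g$ is a space form. The hard part will be exactly this: excluding a third Ricci eigenvalue on the level sets and forcing the warped block to have constant curvature. In dimension four Kim--Shin \cite{KS} exploit the self-dual/anti-self-dual splitting of the Weyl tensor; for $n\ge5$ one must instead control the full Weyl tensor through the Codazzi system, in the manner of the author's treatment \cite{L} of gradient Ricci solitons.

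\smallskip\noindent\textbf{Step 4 (globalization and enumeration).}
Completeness lets the geodesics orthogonal to the level sets of $f$ extend for all time, so the local product propagates and, up to a quotient, $(M,g)=W^{r}\times N^{n-r}$ with $N$ Einstein of constant $\tfrac{R}{n-1}$ and $W^{r}$ a complete $r$-dimensional vacuum static space with harmonic curvature whose warping base is a space form. It then remains to solve the ODE for $h$ and organize cases. If no nontrivial factor $N$ splits off ($r=n$) or $W^{r}$ is Einstein, then $(M,g)$ is $D$-flat, i.e.\ case (i). If $r=2$, then $W^{2}$ has constant scalar, hence Gauss, curvature, and $\operatorname{Hess} f=-\tfrac{R_2}{2}f\,g$ yields, in geodesic polar coordinates, $\mathbb S^{2}\bigl(\tfrac{R}{2(n-1)}\bigr)$ with $f=c_1\cos\bigl(\sqrt{R/2(n-1)}\,s\bigr)$ when $R>0$ (case (ii)), $\mathbb H^{2}\bigl(\tfrac{R}{2(n-1)}\bigr)$ with $f=c_2\cosh\bigl(\sqrt{-R/2(n-1)}\,s\bigr)$ when $R<0$ (case (iii)), and the flat case collapses to an Einstein product, i.e.\ case (i). If $3\le r\le n-1$, then $W^{r}$ is $D$-flat, so by Qing--Yuan \cite{QY} applied in dimension $r$ it is Einstein or one of Examples 1--5 of Section \ref{prevss}, giving case (iv) with $f=c\,h'$; and $r=n-1$ forces $R=0$, since then $N^{1}$ must be Ricci flat with Einstein constant $\tfrac{R}{n-1}$. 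Passing to quotients by the $f$-preserving isometries accounts for the word ``quotient'' in the statement.
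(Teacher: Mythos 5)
Your overall strategy coincides with the paper's (restrict to $\{\nabla f\neq 0\}$, use the identity \eqref{harcon} to show $\nabla f$ is a Ricci eigenvector, reduce to a multiply warped product, then classify according to the eigenvalue structure of the Ricci tensor transverse to $\nabla f$), but there is a genuine gap exactly where you flag ``the main obstacle'': Step 3 is announced (``I expect to show\dots'', ``the hard part will be exactly this'') rather than carried out, and it is the mathematical core of the theorem. What the paper actually does there is first establish the multiply warped normal form and the scalar ODE system \eqref{3.11}--\eqref{3.14} for the functions $\xi_{a}=h_{a}'/h_{a}$ (Theorem \ref{mulwar}), and then: (a) Lemma \ref{lemma4.1} yields the identity \eqref{4.7}, whose left-hand side is independent of which pair of distinct eigenvalues is chosen, so that $(X+Y)\sum_{i}\xi_{i}=(X+Z)\sum_{i}\xi_{i}=(Y+Z)\sum_{i}\xi_{i}$; Theorem \ref{thm4.2} deduces from this that three or more distinct non-radial eigenvalues are impossible; and (b) Lemma \ref{lemma5.1} combines \eqref{4.6} with $f\xi_{i}'=f'\xi_{i}-f\xi_{i}^{2}$ to force $XY=0$, which is precisely what turns one eigendistribution into a genuine (unwarped) product factor. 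None of these computations appear in your proposal, and without them the case enumeration of Step 4 is unsupported.

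Two further concrete errors. First, your claim that the fiber $\tilde g$ of the warped block is a space form is both unjustified (harmonic curvature of the total space does not force an Einstein fiber to have constant sectional curvature) and inconsistent with the statement being proved: in Theorem \ref{d-flat} and in types (i) and (iv) the fiber is only Einstein, and this is exactly where the $D$-flat classification differs from Kobayashi's locally conformally flat one. Second, the assertion that \eqref{static} forces the critical points of $f$ to be isolated and nondegenerate is false: already in Example 1 of Section \ref{prevss} ($\mathbb R\times N^{n-1}$ with $f$ a trigonometric function of $s$) the critical set is a hypersurface. The correct route, used in the paper, is that a metric with harmonic curvature is real analytic in harmonic coordinates, so $f$ is real analytic and $\{\nabla f\neq 0\}$ is open and dense whenever $f$ is nonconstant.
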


Finally, we pick up compact spaces in Theorem \ref{complete}.

\begin{theorem} \label{cpt}
Let $(M^n, g,f )$ be an $n$-dimensional, $n\geq5$, compact vacuum static space with harmonic curvature satisfying \eqref{static}.
Then it is one of the following types:

\smallskip
{\rm (i)} $(M,g)$ is $D$-flat. Hence, $(M,g)$ is either isometric to the Euclidean sphere $\mathbb S^n$, or a quotient of $(\mathbb{S}^1,ds^2)\times(N^{n-1},g_0)$, 
or a quotient of a product torus $(\mathbb S^1\times N^{n-1}, ds^2 +h^2(s)g_0)$,
where $( N^{n-1},g_0)$ is Einstein.

\smallskip
{\rm (ii)}
$(M,g)$ is isometric to a quotient of $\mathbb{S}^2\left(\frac{R}{2(n-1)}\right) \times   N^{n-2} $ with $R>0$, 
where $\left(N^{n-2}, {g_2}\right)$ is Einstein with positive Einstein constant $\frac{R}{n-1}$.
$f=c_1\cos\left( \sqrt{\frac{R}{2(n-1)}}s \right)$ for some constant $c_1\neq0$, 
where $s$ is the distance on $\mathbb{S}^2(\frac{R}{2(n-1)})$ from a point.
	
\smallskip
{\rm (iii)} $(M,g)$ is isometric to a quotient of the Riemannian product $(W^{r}\times N^{n-r}, g=\bar{g}+g_2)$, where $3 \leq r\leq n-1$,
$( W^{r},\bar{g}=ds^2+h^2(s)\tilde{g})$ is an $r$-dimensional $D$-flat vacuum static space
and $ \left( N^{n-r},g_2\right) $ is an Einstein manifold of the Einstein constant $\frac{R}{n-1}$. $f=ch'$ for some constant $c$.
Particularly, $R=0$ for $r= n-1$.
\end{theorem}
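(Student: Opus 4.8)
The plan is to deduce Theorem~\ref{cpt} from Theorem~\ref{complete} by a case analysis: if $(M^n,g,f)$ is a compact vacuum static space with harmonic curvature and $n\ge5$, then it is one of the four types (i)--(iv) of Theorem~\ref{complete}, and it remains only to decide which of these admit compact representatives and to rewrite the compact ones in the stated form. Type (iii) is the one that drops out: there $f=c_2\cosh\!\big(\sqrt{-R/(2(n-1))}\,s\big)$ with $c_2\ne0$ and $s$ the distance on $\mathbb{H}^2(\tfrac{R}{2(n-1)})$ from a point, so the pullback of $f$ to the universal cover $\mathbb{H}^2\times\widetilde N$ of $M$ is an unbounded function; since a continuous function on the compact manifold $M$ is bounded, this is impossible. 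Hence no compact vacuum static space with harmonic curvature is of type (iii).

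Type (ii) survives verbatim. Indeed the factor $N^{n-2}$ is complete (it is a factor of the complete universal cover of $M$) and Einstein with positive Einstein constant $R/(n-1)$, so by Myers' theorem $N^{n-2}$ is compact; then $\mathbb{S}^2\times N^{n-2}$ is compact, and so is any quotient of it. Thus every type-(ii) space is compact and reappears unchanged as case (ii) of Theorem~\ref{cpt}. Type (iv) likewise carries over: when $M$ is compact, the warped-product building block $W^r$ (so that its profile variable $s$ runs over a circle) and the Einstein factor $N^{n-r}$ are forced to be compact, and the description --- including the clause that $R=0$ when $r=n-1$ --- is exactly case (iii) of Theorem~\ref{cpt}.

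This leaves type (i), where $(M,g)$ is $D$-flat, hence by Qing--Yuan~\cite{QY} either Einstein or one of Kobayashi's Examples 1--5; the task is to intersect this list with compactness. A compact Einstein vacuum static space has constant, non-negative scalar curvature by Bourguignon~\cite{Bo} and Fischer--Marsden~\cite{FM}; the value $R=0$ would give $\mathrm{Hess}\,f=0$ and hence a constant $f$ on the compact $M$, so $R>0$ and Obata's theorem~\cite{Ob} identifies $(M,g)$ with the round sphere $\mathbb{S}^n$. Among Kobayashi's non-Einstein examples, which are warped products over an interval or a circle, the compact ones are precisely those over $\mathbb{S}^1$: the metric products $(\mathbb{S}^1,ds^2)\times(N^{n-1},g_0)$ and the ``product tori'' $(\mathbb{S}^1\times N^{n-1},\,ds^2+h^2(s)g_0)$ with $(N^{n-1},g_0)$ Einstein and $h$ a positive periodic solution of the relevant warping equation --- the $D$-flat analogue of Kobayashi--Lafontaine's list, with the round sphere $\mathbb{S}^{n-1}$ replaced by a general Einstein manifold $N^{n-1}$. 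Together these give case (i) of Theorem~\ref{cpt}.

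The real content of Theorem~\ref{cpt} lies entirely in Theorem~\ref{complete}; the remaining work is the case analysis above, and within it the one step that is not essentially mechanical is the classification of compact $D$-flat vacuum static spaces in case (i), i.e.\ checking which of Kobayashi's five model warped products close up to compact quotients and disposing of the Einstein case. The other three types are handled immediately --- by the boundedness of $f$ for type (iii), by Myers' theorem for type (ii), and by the product structure for type (iv).
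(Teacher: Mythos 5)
Your proposal is correct and follows essentially the same route as the paper, which proves Theorem~\ref{cpt} simply by ``picking up'' the compact spaces from Theorem~\ref{complete}: type (iii) of Theorem~\ref{complete} is ruled out because the lifted lapse $c_2\cosh\bigl(\sqrt{-R/(2(n-1))}\,s\bigr)$ is unbounded on the noncompact cover while $f$ is bounded on compact $M$, and case (i) is handled by quoting the Qing--Yuan classification of compact $D$-flat vacuum static spaces (Theorem~\ref{Dcpt}) rather than re-deriving it as you sketch. The remaining types carry over verbatim, exactly as you say.
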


\begin{remark}
It is worth noting that types (ii) and (iii) will provide new counterexamples, which are not $D$-flat, to the Fischer-Marsden conjecture. Moreover, Examples in type (iii) are the Riemannian products of $D$-flat vacuum static spaces and Einstein manifolds, which are similar to the rigid gradient Ricci solitons that are the Riemannian products of Gaussian solitons and Einstein manifolds. Therefore, we see that the $D$-flatness property plays an important role in vacuum static spaces.
\end{remark}

This paper is organized as follows. 
In Section \ref{prevss}, we give some formulae and notations for Riemannian manifolds and vacuum static spaces by using the method of moving frames.
In Section \ref{vss3}, we derive the integrability conditions (ODEs) for a vacuum static space with harmonic curvature
and show that, locally, the metric is a multiply warped product.
In Section \ref{vss4}-\ref{vss5}, in order to complete the proof of Theorem \ref{complete}, we divide our discussion into three cases according to the numbers and multiplicities of distinct
Ricci-eigenvalues, excluding the one with respect to the gradient vector of the lapse function.

\section{Preliminaries}\label{prevss}
In this section, we first recall some formulae and notations 
for Riemannian manifolds by using the method of moving frames.
Then we give some facts on vacuum static spaces.

\subsection{Some notations for Riemannian manifolds.}
Let $M^{n}(n \geq 3)$ be an
$n$-dimensional Riemannian manifold,
$E_{1}, \cdots, E_{n}$ be a local orthonormal frame
fields on $M^{n}$, and $\omega_{1}, \cdots, \omega_{n}$
be their dual 1-forms. In this paper we make the following
conventions on the range of indices:
\[
1\leq i,j,k,\cdots\leq n
\]
and agree that repeated indices are summed over the respective ranges. 
Then we can write the structure equations of $M^{n}$ as follows:
\begin{equation}\label{2.1}
d\omega_{i}=\omega_{j}\wedge\omega_{ji}
\quad {\rm and }\quad \omega_{ij}+\omega_{ji}=0;
\end{equation}
\begin{equation}\label{2.2}
-\frac{1}{2}R_{ijkl}\omega_{k}\wedge\omega_{l}=
d\omega_{ij}-\omega_{ik}\wedge\omega_{kj}
\quad {\rm and }\quad R_{ijkl}=-R_{jikl},
\end{equation}
where $d$ is the exterior differential operator on $M$,
$\omega_{ij}$ is the Levi-Civita connection form
and $R_{ijkl}$ is the Riemannian curvature tensor of $M$.
It is known that the Riemannian curvature tensor satisfies
the following identities:
\begin{equation}\label{2.3}
R_{ijkl}=-R_{ijlk}, \quad R_{ijkl}=R_{klij}
\quad {\rm and }\quad R_{ijkl}+R_{iklj}+R_{iljk}=0.
\end{equation}
The Ricci tensor $R_{ij}$ and scalar curvature $R$ are defined respectively by
\begin{equation}\label{2.4}
R_{ij}:=\sum\limits_{k}R_{ikjk} \quad {\rm and }\quad R=\sum\limits_{i}R_{ii}.
\end{equation}
Let $f$ be a smooth function on $M^{n}$, we define the
covariant derivatives $f_{i}$, $f_{i,j}$ and $f_{i,jk}$ as follows:
\begin{equation}\label{2.5}
f_{i}\omega_{i}:=df,\quad  f_{i,j}\omega_{j}:=df_{i}+f_{j}\omega_{ji},
\end{equation}
and
\begin{equation}\label{2.6}
f_{i,jk}\omega_{k}:=df_{i,j}+f_{k,j}\omega_{ki}+f_{i,k}\omega_{kj}.
\end{equation}
We know that
\begin{equation}\label{2.7}
f_{i,j}=f_{j,i} \quad {\rm and }\quad f_{i,jk}-f_{i,kj}=f_{l}R_{lijk}.
\end{equation}
The gradient, Hessian and Laplacian of $f$ are defined by the following formulae:
\begin{equation}\label{2.8}
\nabla f:= f_{i}E_{i}, \quad  Hess(f):=f_{i,j}\omega_{i}\otimes \omega_{j} \quad {\rm and }\quad 
\Delta f:=\sum\limits_{i}f_{i,i}.
\end{equation}
The covariant derivatives of tensors $R_{ij}$ and $R_{ijkl}$
are defined by the following formulae:
\begin{equation}\label{2.9}
R_{ij,k}\omega_{k}:=dR_{ij}+R_{kj}\omega_{ki}+R_{ik}\omega_{kj}
\end{equation}
and
\begin{equation}\label{2.10}
R_{ijkl,m}\omega_{m}:=dR_{ijkl}+R_{mjkl}\omega_{mi}+
R_{imkl}\omega_{mj}+R_{ijml}\omega_{mk}+R_{ijkm}\omega_{ml}.
\end{equation}
By exterior differentiation of \eqref{2.2}, one can get the second Bianchi identity
\begin{equation}\label{2.11}
R_{ijkl,m}+R_{ijlm,k}+R_{ijmk,l}=0.
\end{equation}
From \eqref{2.4}, \eqref{2.10} and \eqref{2.11}, we have
\begin{equation}\label{2.12}
R_{ij,k}-R_{ik,j}=-\sum\limits_{l}R_{lijk,l},
\end{equation}
and so
\begin{equation}\label{2.13}
\sum\limits_{j}R_{ji,j}=\frac{1}{2}R_{i}.
\end{equation}

We define the Schouten tensor as
$A=A_{ij}\omega_{i}\otimes \omega_{j},$
where
\begin{equation}\label{2.14}
A_{ij}:=R_{ij}-\frac{1}{2(n-1)}R \delta_{ij},
\end{equation}
then $A_{ij}=A_{ji}$. The tensor
\begin{equation}\label{2.15}
W_{ijkl}:=R_{ijkl}-\frac{1}{n-2}
(A_{ik}\delta_{jl}+A_{jl}\delta_{ik}-
A_{il}\delta_{jk}-A_{jk}\delta_{il})
\end{equation}
is called the Weyl conformal curvature tensor.
In dimension three, $W$ is identically zero on every Riemannian manifold, whereas,
when $n\geq 4$, the vanishing of the Weyl tensor is equivalent to the locally
conformal flatness of $(M^n,g)$. We also recall that in dimension $n=3$, 
$(M, g)$ is locally conformally flat iff the Cotton tensor $C$, defined as follows, vanishes
\begin{equation}\label{2.16}
C_{ijk} := A_{ij,k} - A_{ik,j}.
\end{equation}
We recall that, for $n\geq 4$, using the second Bianchi identity 
the Cotton tensor can also be defined as one of the possible divergences of the Weyl tensor:
\begin{equation}\label{2.17}
-\frac{n-2}{n-3}\sum\limits_{l} W_{lijk,l}= C_{ijk}.
\end{equation}

On any $n$-dimensional manifold $(M, g)$ $(n\geq 4)$,
in what follows a relevant role will be played by the Bach tensor, 
first introduced in general relativity by Bach \cite{bac} in early 1920s'. By definition,
\begin{equation}\label{2.18}
B_{ij}: = \frac{1}{n-3}W_{ikjl, kl} + \frac{1}{n-2}R_{kl}W_{ikjl}
\end{equation}
and by \eqref{2.17}, we have an equivalent expression of the Bach tensor:
\begin{equation}\label{2.19}
B_{ij} =\frac{1}{n-2} \left( C_{ijk, k}+R_{kl}W_{ikjl}\right).
\end{equation}

\subsection{The $D$-tensor and $D$-flat vacuum static spaces}
We will recall the $D$-tensor defined in \cite{QY} for vacuum static spaces and the classification result for $D$-flat vacuum static spaces by Qing-Yuan \cite{QY}.

The covariant 3-tensor $D_{ijk}$ (see \cite{QY}) is defined by
\begin{equation}
\begin{aligned}
\label{D}
D_{ijk}=&\frac{n-1}{n-2}\left(R_{ik}f_j-R_{ij}f_k\right)
+\frac{R}{n-2}\left(f_k\delta_{ij}-f_j\delta_{ik}\right) \\
&+\frac{1}{n-2}f_l\left(R_{lj}\delta_{ik}-R_{lk}\delta_{ij}\right),
\end{aligned}
\end{equation}
which is the analog of the $D$-tensor defined by Cao-Chen \cite{CC,CC2} for Ricci solitons.
First of all, we have the following lemma.
 
\begin{lemma}[{\bf Qing-Yuan \cite{QY}}]
	\label{lemDCW}
Let $(M^n,g,f)$ be a vacuum static space satisfying \eqref{static}. Then the following formulae hold:
\begin{equation}\label{DCW}
fC_{ijk}=f_lW_{lijk}+D_{ijk}.
\end{equation}
\begin{equation}\label{DB}
(n-2)fB_{ij}=D_{ijk,k}
-f_k\left( C_{ijk} + \frac{n-3}{n-2}C_{jik}\right) .	
\end{equation}
\end{lemma}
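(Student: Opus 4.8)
The plan is to establish both identities by a direct computation in a local orthonormal frame, the engine being the commutation formula \eqref{2.7} for third covariant derivatives of $f$ combined with the static equation \eqref{static}. I would first record two preliminary facts: tracing \eqref{static} gives $\Delta f=-\tfrac{R}{n-1}f$, and differentiating \eqref{static} once more and contracting (using \eqref{2.13} and \eqref{2.7}) forces $R$ to be constant on $\{f\neq0\}$. This is exactly what makes $C_{ijk}=R_{ij,k}-R_{ik,j}$ and lets the derivatives of $R$ be dropped throughout.

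For \eqref{DCW}, I would write \eqref{static} as $f_{i,j}=f\big(R_{ij}-\tfrac{R}{n-1}\delta_{ij}\big)$, differentiate to obtain $f_{i,jk}=f_k\big(R_{ij}-\tfrac{R}{n-1}\delta_{ij}\big)+f\,R_{ij,k}$, then antisymmetrize in $j,k$ and substitute $f_{i,jk}-f_{i,kj}=f_lR_{lijk}$ from \eqref{2.7}. The Hessian terms cancel, leaving $f\,C_{ijk}$ expressed through $f_lR_{lijk}$ together with terms linear in the Ricci tensor and $\nabla f$. I would then insert the Weyl decomposition \eqref{2.15} of $R_{lijk}$, contract its Schouten part against $f_l$, rewrite $A_{ij}=R_{ij}-\tfrac{R}{2(n-1)}\delta_{ij}$, and collect. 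The one genuinely computational step is to check that the many $R$-proportional $\delta$-terms, arising both from \eqref{2.15} and from the definition of $A_{ij}$, recombine into coefficient $\tfrac{R}{n-2}$ on $f_k\delta_{ij}-f_j\delta_{ik}$; the remaining non-Weyl part is then exactly $D_{ijk}$ as in \eqref{D}.

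For \eqref{DB}, I would differentiate \eqref{DCW} and contract the last index with the new derivative, obtaining $D_{ijk,k}=f_kC_{ijk}+f\,C_{ijk,k}-f_{l,k}W_{lijk}-f_lW_{lijk,k}$. The term $f_{l,k}W_{lijk}$ I would treat with \eqref{static} once more: its $\delta$-part dies against the trace-free Weyl tensor, leaving $f\,R_{lk}W_{lijk}$, which the pair- and skew-symmetries of $W$ convert to $-f\,R_{kl}W_{ikjl}$. The term $f_lW_{lijk,k}$ I would rewrite via the divergence identity \eqref{2.17}; since \eqref{2.17} is a divergence over the \emph{first} index of $W$ while here the derivative is contracted with the \emph{last} index, I must first move the contracted index into the first slot using the symmetries of $W$, which gives $W_{lijk,k}=\tfrac{n-3}{n-2}C_{jli}$, after which the skew-symmetry $C_{jli}=-C_{jil}$ turns it into $-\tfrac{n-3}{n-2}f_kC_{jik}$. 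Solving for $f\,C_{ijk,k}$ and substituting into $f$ times the Bach identity \eqref{2.19}, the two copies of $f\,R_{kl}W_{ikjl}$ cancel, leaving $(n-2)fB_{ij}=D_{ijk,k}-f_k\big(C_{ijk}+\tfrac{n-3}{n-2}C_{jik}\big)$.

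I expect the main obstacle to be not any single step but the sustained index gymnastics, and in particular pinning down the numerical coefficients and the asymmetric combination $C_{ijk}+\tfrac{n-3}{n-2}C_{jik}$. This requires careful use of the pair symmetry $W_{ijkl}=W_{klij}$ and the first Bianchi identity when contracting $W$ against $f_l$, $f_{l,k}$ and $\nabla f$, and careful tracking of the discrepancy between the ``first-index'' divergence of $W$ in \eqref{2.17} and the ``last-index'' divergence produced by differentiating \eqref{DCW}: it is precisely this discrepancy, together with the skew-symmetry of the Cotton tensor, that generates the asymmetric Cotton combination and forces the Weyl--Ricci contractions to cancel. One must also be disciplined about invoking the constancy of $R$ at each point where the Cotton tensor is identified with $R_{ij,k}-R_{ik,j}$.
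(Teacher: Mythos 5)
Your proof is correct: the key coefficient check for \eqref{DCW} (the $\delta$-terms summing to $\tfrac{R}{2(n-1)(n-2)}+\tfrac{R}{2(n-1)(n-2)}+\tfrac{R}{n-1}=\tfrac{R}{n-2}$), the identity $R_{lk}W_{lijk}=-R_{kl}W_{ikjl}$, and the conversion of the last-index divergence $W_{lijk,k}$ into $\tfrac{n-3}{n-2}C_{jli}=-\tfrac{n-3}{n-2}C_{jil}$ via \eqref{2.17} all work out exactly as you describe. The paper itself gives no proof of this lemma, citing Qing--Yuan \cite{QY}; your argument is the standard derivation used there, so there is nothing to compare beyond noting that your route is the expected one.
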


Next, we recall the following classification of $D$-flat vacuum static spaces, which was obtained by Qing-Yuan \cite{QY} (even though it was not explicitly stated).

\begin{theorem} {\bf(Qing-Yuan \cite{QY})}\label{d-flat}
	 Suppose that $(M^n, g,f)$ is an $n$-dimensional $D$-flat vacuum static space satisfying \eqref{static}. Then the metric $g$ is a locally warped product,
	\[
	g= ds^2 +  h^2(s) \tilde{g}
	\]
	for a positive function $h$, where the Riemannian metric $\tilde{g}$ is Einstein with the Einstein constant $(n-2)k$.
	$(M,g)$ is either Einstein or isometric to one of the spaces in Examples 1-5, as follows.
	
	Furthermore, $h$ satisfies the following equation
	\begin{equation} \label{h}
	h^{''} + \frac{R}{n(n-1)}h = c_0h^{-(n-1)}
	\end{equation}
	for a constant $c_0$ and 
	\begin{equation} \label{hk}
	(h^{'})^2 + \frac{2c_0}{n-2}h^{-(n-2)} + \frac{R}{n(n-1)}h^2 =k.
	\end{equation}
	The non-constant lapse fucntion $f$ satisfies
	\begin{equation}\label{fh}
	h'f'-fh''=0.
	\end{equation}
\end{theorem}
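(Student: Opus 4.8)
The plan is to use the hypothesis $D_{ijk}\equiv 0$ together with the static equation \eqref{static} to force a two–eigenvalue structure on the Ricci tensor, recognize the resulting geometry as a warped product, and finally extract the ODEs \eqref{h}, \eqref{hk}, \eqref{fh}. I work on the open set $\Omega=\{\nabla f\neq 0\}$; if $\nabla f$ vanished on a nonempty open set, then $f$ would be locally constant there and \eqref{static} would give $Rc=\tfrac{R}{n-1}g$, placing $(M,g)$ in the Einstein alternative, so we may assume $\Omega$ is open and dense.

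First I fix a local orthonormal frame $E_1,\dots,E_n$ with $E_1=\nabla f/|\nabla f|$, so that $f_1=w:=|\nabla f|>0$ and $f_a=0$ for $a\ge 2$. Substituting into $D_{ijk}=0$ in \eqref{D}, the component $D_{11a}$ collapses to $R_{1a}w=0$, hence $R_{1a}=0$: the field $\nabla f$ is a Ricci eigenvector. Then the component $D_{ab1}$ (for $a,b\ge 2$) reduces to $-(n-1)R_{ab}+(R-R_{11})\delta_{ab}=0$, i.e. $R_{ab}=\lambda\,\delta_{ab}$ with $\lambda=\tfrac{R-R_{11}}{n-1}$ (and the remaining components of $D$ then vanish automatically). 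Thus $Rc$ has at most two eigenvalues: $\mu:=R_{11}$ with eigenvector $\nabla f$, and $\lambda$ of multiplicity $n-1$, linked by $\mu+(n-1)\lambda=R$.

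Next I feed this back into \eqref{static}, which gives $f_{1,a}=fR_{1a}=0$, $f_{1,1}=f\big(\mu-\tfrac{R}{n-1}\big)$ and $f_{a,b}=f\big(\lambda-\tfrac{R}{n-1}\big)\delta_{ab}=-\tfrac{\mu f}{n-1}\delta_{ab}$. From $f_{1,a}=0$ one checks that $|\nabla f|$ is constant on the level sets of $f$ and that $E_1$ is a unit geodesic field, so the normal exponential map off a regular level set $N$ yields coordinates in which $g=ds^2+g_s$ with $f=f(s)$, $f'=w$. The relation $f_{a,b}=-\tfrac{\mu f}{n-1}\delta_{ab}$ says exactly that the second fundamental form of each level set is a scalar multiple of its first fundamental form; hence $\partial_s g_s$ is proportional to $g_s$, which integrates to $g_s=h(s)^2\tilde g$ for a fixed metric $\tilde g$ on $N$ and a positive $h$ with $h(s_0)=1$. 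Computing the Ricci tensor of the warped product $g=ds^2+h^2\tilde g$ and matching the vertical part with $Rc|_{\mathrm{vert}}=\lambda\,g|_{\mathrm{vert}}$ forces $Rc_{\tilde g}=(n-2)k\,\tilde g$ for a constant $k$, i.e. $\tilde g$ is Einstein, while the normal part gives $\mu=-(n-1)h''/h$. The constancy of $R$ (Bourguignon, Fischer–Marsden) turns $\mu+(n-1)\lambda=R$ into a second–order ODE for $h$ whose first integral is \eqref{hk} and whose derivative is \eqref{h}; comparing $\mathrm{Hess}\,f$ in the warped coordinates with $f_{a,b}=\tfrac{fh''}{h}\delta_{ab}$ yields $h'f'-fh''=0$, which is \eqref{fh}, hence $f=ch'$. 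Finally, solving \eqref{h} explicitly and imposing completeness recovers the five families of Kobayashi \cite{Ko} (the constant/degenerate solutions being the Einstein ones), giving the stated dichotomy.

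The step I expect to be the main obstacle is not the algebra that produces the Ricci structure but the passage from the pointwise two–eigenvalue picture to a genuinely rigid warped product: one must verify that $\mu,\lambda,w$ depend on $f$ alone, that $E_1$ is geodesic, and that umbilicity of the level sets really trivializes the $s$–dependence of $g_s$ up to the single factor $h$, and then keep careful track of the normalization of the Einstein constant of $\tilde g$ and of the integration constants so that the ODEs come out exactly in the form \eqref{h}–\eqref{hk}. The remaining global classification into Examples 1–5 versus Einstein is then the completeness analysis carried out in \cite{QY} (building on \cite{Ko}), which I would simply cite.
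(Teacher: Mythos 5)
The paper does not actually prove Theorem \ref{d-flat}: it is quoted from Qing--Yuan \cite{QY} (the text even notes it "was not explicitly stated" there), so there is no in-paper argument to compare against. Judged on its own, your reconstruction is correct and is essentially the standard Kobayashi/Qing--Yuan argument. The two algebraic pivots check out: with $f_1=w$, $f_a=0$, the component $D_{11a}$ of \eqref{D} reduces to $wR_{1a}$, and $D_{ab1}$ reduces to $-\tfrac{n-1}{n-2}wR_{ab}+\tfrac{w}{n-2}(R-R_{11})\delta_{ab}$, so $D=0$ does force $R_{1a}=0$ and $R_{ab}=\tfrac{R-R_{11}}{n-1}\delta_{ab}$; the umbilicity of the level sets with an $s$-dependent factor then gives the warped product, and matching the vertical Ricci curvature forces $\widetilde{Rc}=(n-2)k\,\tilde g$ with $(n-2)k=2hh''+(n-2)(h')^2+\tfrac{R}{n-1}h^2$, whose first integral (multiply by $h^{n-3}h'$) is exactly \eqref{hk} and whose differentiated form is \eqref{h}; comparing the two expressions for $f_{a,b}$ gives \eqref{fh}. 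Your route through the explicit components of $D$ parallels what the paper does for the harmonic-curvature case in Section \ref{vss3} (where \eqref{harcon} plays the role your $D_{11a}=0$ plays here). The only soft spots, which you correctly flag rather than hide, are (a) the propagation of the local warped-product structure across $\{\nabla f=0\}$ and $\{f=0\}$ to get the global Einstein-versus-Examples dichotomy, which needs the real-analyticity of static metrics and the completeness analysis of \cite{Ko}, \cite{QY}, and (b) the slightly inverted phrasing about \eqref{h} and \eqref{hk} (\eqref{hk} is the first integral of the trace relation; \eqref{h} is its derivative). Neither is a genuine gap in the intended citation-level proof.
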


To compare with Kobayashi's work \cite{Ko} on locally conformally flat vacuum static spaces, we note that the only difference is that $\tilde{g}$ is an Einstein metric in the $D$-flat case, while it is of constant sectional curvature in the locally conformally flat case. Therefore, as noted in \cite{QY}, one only needs to replace the constant curvature space factor of the warped products in \cite{Ko} with corresponding Einstein manifold factor to obtain the following five $D$-flat examples.

\smallskip
{\bf Example 1}:
Let $N(k)$ be an $(n-1)$-dimensional connected complete Einstein space
with the Einstein constant $(n-2)k$ . 
On a Riemannian product $\mathbb R\times N^{n-1}(k)$, $k\neq 0$,
\begin{equation}
f(s)=\left\{\begin{array}
{ll}c_{1}\sin\sqrt{(n-2)k} s+c_{2}\cos\sqrt{(n-2)k}s+x, &{\rm if} ~ k>0,\\
c_{1}\sinh\sqrt{-(n-2)k} s++c_{2}\cosh\sqrt{-(n-2)k} s+x, &{\rm if} ~ k<0,
\end{array}\right.
\end{equation}
where $c_{1}$ and $c_{2}$ are constants. 

\smallskip
{\bf Example 2}: Compact quotients of Example 1 with $k>0$, with an explicit isometry group $\Gamma_{l, \phi}$ generated by $(s, x) \to (s+2l\pi/\sqrt{(n-2)k}, \,\phi(x))$, where $l$ is any natural number and $\phi \in Isom (N(k))$. 

\smallskip
{\bf Example 3}. 
A warped product ${\mathbb R} \times_h N^{n-1}(1)$, where $h$ is a periodic solution to \eqref{h} with $a>0, R>0, k>k_0=\frac R {(n-1)(n-2)} (\frac {n(n-1)a} {R})^{2/n}$ (as in Proposition 2.6 of Kobayashi \cite{Ko}), and non-trivial $f$ satisfying $f(s)=ch'$ for some constant $c$ . 

\smallskip
{\bf Example 4}. Since $f$ and $h$ in Example 3 have a common period, we obtain
compact spaces with non-trivial solutions to \eqref{static} in a similar way as Example 2.
These spaces were first found by Ejiri \cite{Ejiri} as counterexamples for a different problem.
 
\smallskip
{\bf Example 5}. 
 A warped product ${\mathbb R} \times_h N^{n-1}(k)$, where $h, k$ are as in Proposition 2.5 of Kobayashi \cite{Ko} (i.e, $h,\, k$ satisfies one of the conditions (IV.1)-(IV.4)), and non-trivial $f(s)=ch'$ for some constant $c$.
 
\begin{remark}
	Suppose that $(M^n, g)$ is a warped product $\left(I\times\, _h N^{n-1},\,ds^2 +  h^2(s) \tilde{g}\right)$, where $h$ is not constant and $\left(N^{n-1},~ h^2(s) \tilde{g}\right)$
	is Einstein with the Einstein constant $(n-2)k$.
	Then there exists a smooth function $f$ depending only on $s$ satisfying \eqref{static}
	if and only if $(M^n, g)$ is of constant scalar curvature, as well as functions $f$ and $h$ satisfy \eqref{h} and \eqref{fh}. 	
\end{remark}

\smallskip
One can now pick up compact spaces in Theorem \ref{d-flat}.
\begin{theorem}{\bf(Qing-Yuan \cite{QY})}\label{Dcpt}
	Let $(M^n, g,f)$ be a compact $D$-flat vacuum static space satisfying \eqref{static}.
	Then $M$ is isometric to the Euclidean sphere $\mathbb S^n$, or the quotient of 
	$(\mathbb{S}^1,ds^2)\times(N^{n-1},g_0)$, or the quotient of a product torus $(\mathbb S^1\times N^{n-1}, ds^2 +h^2(s)g_0)$, where $( N^{n-1},g_0)$ is an Einstein manifold.
\end{theorem}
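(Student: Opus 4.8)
The plan is to feed the local structure of Theorem~\ref{d-flat} into the compactness hypothesis and read off which warped products survive. By Theorem~\ref{d-flat}, either $(M,g)$ is Einstein, or, locally, $g=ds^{2}+h^{2}(s)\tilde g$ with $\tilde g$ Einstein of constant $(n-2)k$, the warping function $h>0$ solving \eqref{h} and \eqref{hk}, and the non-constant lapse function given by $f=ch'$.

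First I would dispose of the Einstein case. Since $M$ is compact, by Bourguignon and Fischer--Marsden \cite{Bo,FM} the scalar curvature $R$ is a non-negative constant, and substituting $Rc=\tfrac{R}{n}g$ turns \eqref{static} into the Obata equation $Hess\,f=-\tfrac{R}{n(n-1)}\,fg$. As $f$ is non-constant, this forces $R>0$, and Obata's theorem \cite{Ob} gives that $(M,g)$ is isometric to the Euclidean sphere $\mathbb S^{n}$.

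Next, assume $(M,g)$ is not Einstein, so near each point it is one of Examples~1--5. The key step is to show that compactness forces the base variable $s$ to run over a circle and $h$ (hence $f=ch'$) to be a positive \emph{periodic} solution of \eqref{h}--\eqref{hk}: if $h$ approached a zero at an endpoint of its maximal interval, then $M$ would be topologically $\mathbb S^{n}$ and smoothness of $g$ at the pole would force $\tilde g$ to have constant curvature $1$, reducing to Kobayashi's conformally flat classification \cite{Ko} and yielding the round $\mathbb S^{n}$ already listed; otherwise $h$ stays in a compact subinterval of $(0,\infty)$ and completeness on a compact manifold makes it periodic. I would then invoke Kobayashi's phase-plane analysis of \eqref{h}--\eqref{hk} (see Propositions 2.5 and 2.6 of \cite{Ko}): a positive periodic solution occurs only in two cases. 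If $h$ is constant then $k>0$, $g$ is the Riemannian product $(\mathbb S^{1},ds^{2})\times(N^{n-1},g_{0})$ with $g_{0}=h^{2}\tilde g$ Einstein, and one is in (a quotient of) Example~2. If $h$ is non-constant then it is the periodic solution of Proposition~2.6 of \cite{Ko}, so $M$ is the product torus $(\mathbb S^{1}\times N^{n-1},\,ds^{2}+h^{2}(s)g_{0})$ with $g_{0}$ Einstein, and $f=ch'$ is automatically periodic with the period of $h$, hence descends; this is (a quotient of) Example~4. In both cases the compact quotients are formed by isometries acting on $\mathbb S^{1}\times N$ exactly as in Examples~2 and~4.

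Finally I would verify that Examples~1, 3 and~5 in their non-compact forms cannot be closed up: for $k<0$ the function $f$ of Example~1 is a non-periodic combination of $\sinh$ and $\cosh$, and in each of Kobayashi's cases (IV.1)--(IV.4) the warping function $h$ of Example~5 is monotone or unbounded on its maximal interval and never periodic, so the total space is complete but non-compact. This leaves exactly the three families in the statement. The main obstacle is the ODE bookkeeping — deciding, via \cite{Ko}, for which constants $(c_{0},R,k)$ the system \eqref{h}--\eqref{hk} has a positive periodic solution and ruling out periodicity otherwise — together with the care needed to exclude a collapsing fibre and to check that the lapse function $f$, not merely the metric $g$, descends to the compact quotient.
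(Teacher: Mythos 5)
Your proposal is correct and follows precisely the route the paper intends: Theorem~\ref{Dcpt} is stated as a recalled result of Qing--Yuan \cite{QY} with no proof given beyond the remark that one ``picks up compact spaces'' from Theorem~\ref{d-flat}. Your argument fills in exactly that outline --- Obata's theorem \cite{Ob} for the Einstein case, and Kobayashi's phase-plane analysis \cite{Ko} of \eqref{h}--\eqref{hk} to isolate the constant and periodic warping functions (Examples~2 and~4) while excluding the non-compact Examples~1, 3 and~5 --- so it matches the paper's (implicit) approach.
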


\section{The basic local structure for $n$-dimensional vacuum static spaces with harmonic curvature}\label{vss3}
In the following sections, in a similar way to that of Ricci soliton \cite{L}, we will give the (local) classification of vacuum static space with harmonic curvature.
In this section, the goal is to derive the integrability conditions (ODEs) 
for a vacuum static space with harmonic curvature, and then show that, locally, the metric is a multiply warped product.
	
\smallskip
 Let $(M^n, g, f)$, $n\geq4$, be an $n$-dimensional  vacuum static space with harmonic curvature satisfying \eqref{static}. First of all, we recall the next lemma (see also Lemma 2.5 in \cite{KS}).
\begin{lemma} \label{lemma3.1}
In some neighborhood $U$ of each point in $\{ \nabla f \neq 0  \}$, 
we choose an orthonormal frame field
$\{E_1= \frac{\nabla f}{|\nabla f| }, E_2, \cdots,  E_n \}$ 
with the dual frame field $\{\omega_1= \frac{d f}{|\nabla f| }, \omega_2, \cdots, \omega_n\}$.
Then we have following properties
		
\smallskip
{\rm (i)} $E_1= \frac{\nabla f }{|\nabla f | }$ is an eigenvector field of the Ricci tensor.
		
\smallskip
{\rm (ii)} The 1-form $\omega_1= \frac{d f}{|\nabla f| }$ is closed. 
So the distribution $V={\rm Span}\{E_2, \cdots , E_n\}$ is integrable from the Frobenius theorem. 
We denote by $L$ and $N$ the integrable curve of the vector field $E_1$ 
and the integrable submanifold of $V$ respectively.
Then it follows locally $M=L\times N$ 
and there exists a local coordinates $(s, x_2, \cdots , x_n)$ of $M$ such that 
$ ds= \frac{d f}{|\nabla f|}$,  $E_1 = \nabla s$,
$V={\rm Span} \{\frac{\partial}{\partial x_2}, \cdots , \frac{\partial}{\partial x_n}\} $
and
$g=ds^2+\sum_{a}\omega_a^2$.
		
\smallskip
{\rm (iii)}
$f$ and $R_{11}$ can be considered as functions of the single variable $s$.
\end{lemma}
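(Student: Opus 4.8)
The plan is to exploit the harmonic curvature hypothesis through the identity, immediate from \eqref{2.12}, that the Ricci tensor is a Codazzi tensor, $R_{ij,k}=R_{ik,j}$, and to combine this with the static equation \eqref{static} written in frame form as $f_{i,j}=f\left(R_{ij}-\tfrac{R}{n-1}\delta_{ij}\right)$. (Note that the Codazzi property together with \eqref{2.13} already forces $R_{,i}=0$, so $R$ may be treated as a constant throughout, as is consistent with the fact quoted in the Introduction.)

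First I would prove (i). Differentiating the static equation and using $R_{,k}=0$ gives $f_{i,jk}=f_k\left(R_{ij}-\tfrac{R}{n-1}\delta_{ij}\right)+fR_{ij,k}$. Antisymmetrizing in $j,k$ and applying the Ricci identity \eqref{2.7}, the terms $fR_{ij,k}$ cancel by the Codazzi property, leaving
\[
f_k R_{ij}-f_j R_{ik}-\frac{R}{n-1}\left(f_k\delta_{ij}-f_j\delta_{ik}\right)=f_l R_{lijk}.
\]
Contracting this with $f_i$ and summing over $i$, the right-hand side vanishes because $R_{lijk}$ is antisymmetric in $l\leftrightarrow i$, and the scalar-curvature terms cancel, yielding $f_k\,(Rc(\nabla f))_j=f_j\,(Rc(\nabla f))_k$ for all $j,k$. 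At a point where $\nabla f\neq0$ this forces $Rc(\nabla f)$ to be proportional to $\nabla f$, so $E_1=\nabla f/|\nabla f|$ is a Ricci eigenvector; I call its eigenvalue $R_{11}$, so that in the adapted frame $R_{1a}=0$ for $a\geq2$.

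Next, for (ii): in the adapted frame $f_a=0$, $f_1=|\nabla f|$, and the static equation now gives $f_{1,a}=0$ for $a\geq2$ and $f_{1,1}=f\left(R_{11}-\tfrac{R}{n-1}\right)$. Hence by \eqref{2.5} $df_1=f_{1,1}\omega_1$ is a multiple of $\omega_1=df/f_1$, so $d\omega_1=-f_1^{-2}\,df_1\wedge df=0$; thus $\omega_1$ is closed and $V=\ker\omega_1=\mathrm{Span}\{E_2,\dots,E_n\}$ is integrable by Frobenius. Moreover the components $\omega_{b1}(E_1)=f_{1,b}/f_1$ vanish, so $\nabla_{E_1}E_1=0$ and the integral curves of $E_1$ are unit-speed geodesics meeting the leaves of $V$ orthogonally. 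Writing $\omega_1=ds$ locally and taking Fermi coordinates $(s,x_2,\dots,x_n)$ adapted to a leaf, the Gauss lemma yields $g=ds^2+\sum_a\omega_a^2$ with $E_1=\nabla s=\partial_s$ and $V=\mathrm{Span}\{\partial_{x_2},\dots,\partial_{x_n}\}$, which is the content of (ii).

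Finally, (iii): from $df=f_1\,ds$ we get $\partial f/\partial x_a=0$, so $f=f(s)$. For $R_{11}$, the Codazzi property gives $R_{11,a}=R_{1a,1}$; expanding $R_{1a,k}\omega_k$ via \eqref{2.9} using $R_{1a}\equiv0$ in $U$ and evaluating on $E_1$, every surviving term carries a factor $\omega_{b1}(E_1)=0$ or $\omega_{1a}(E_1)=0$, whence $R_{1a,1}=0$. Thus $R_{11,a}=0$ for all $a\geq2$, and since likewise the connection terms in \eqref{2.9} drop out so that $dR_{11}=R_{11,k}\omega_k$, we conclude $dR_{11}$ is a multiple of $\omega_1=ds$, i.e.\ $R_{11}$ is a function of $s$ alone. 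I expect the main obstacle to be the first step: choosing the contraction of the commuted-derivative identity correctly so that the harmonic-curvature hypothesis can be fully used to pin $\nabla f$ as a Ricci eigenvector; once that is in hand, the closedness of $\omega_1$, the geodesic property of $E_1$, and the passage to adapted coordinates are routine.
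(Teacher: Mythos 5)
Your proof is correct and follows essentially the same route as the paper: both derive the identity $f_lR_{lijk}=f_k(R_{ij}-\tfrac{R}{n-1}\delta_{ij})-f_j(R_{ik}-\tfrac{R}{n-1}\delta_{ik})$ from the static equation, the Ricci identity and the Codazzi property of $Rc$, use it to get $R_{1a}=0$, and then deduce closedness of $\omega_1$ from $f_{1,a}=0$. The only (immaterial) differences are that you contract the identity with $f_i$ invariantly where the paper plugs in $i=j=1$, $k=a$, and for (iii) you show $R_{11}=R_{11}(s)$ via $R_{11,a}=R_{1a,1}=0$ whereas the paper simply solves the static equation for $R_{11}$ in terms of $f(s)$ and the (constant) scalar curvature.
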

	
\begin{proof}
Firstly, it follows from \eqref{2.12} that the Ricci tensor of a Riemannian metric with harmonic curvature is a Codazzi tensor, i.e. $R_{ij,k}=R_{ik,j}$.
Moreover, for any $n$-dimensional manifold $(M^n, g, f)$ with harmonic curvature satisfying \eqref{static},
it holds that
\begin{equation}\label{harcon}
f_l R_{lijk}=f_k\left(R_{ij}-\frac{R}{n-1}\delta_{ij} \right)
-f_j\left(R_{ik}-\frac{R}{n-1}\delta_{ik} \right)
\end{equation}
where the Ricci identity \eqref{2.7} was used. Noting that 
$f_1=|\nabla f|\neq 0$, $f_a=0$ for $2\leq a\leq n$, setting $i=j=1$, and $k=a$ in \eqref{harcon}, we have 
$f_1 R_{1a}=0$.
Hence $R_{1a}=0$, i.e., $E_1= \frac{\nabla f }{|\nabla f | }$ is an eigenvector field of the Ricci curvature.
		
Next, $R_{1a}=0$ leads to  
$f_{1,a}
= f(R_{1a} -\frac{R}{n-1} \delta_{1a})=0$, and then 
\[
\left(  \left| \nabla f \right|^ 2\right)_a = 2f_1f_{1,a}=0.
\]
Therefore 
\[
d\omega_1 = d\left( \frac{d f}{|\nabla f|}\right) 
=-\frac{1}{2 |\nabla f|^{\frac{3}{2}}} d \left( |\nabla f|^{2}\right)  \wedge df
=0,
\]
that is $\omega_1= \frac{d f}{|\nabla f| }$ is closed. 

Note that $f$ can be considered as a function of the single variable $s$.
From \eqref{2.5}, it follows that $f_{1,1}=f''$.
Since $f\neq0$ on an open subset, we have
\[
R_{11}=f^{-1}\left( f_{1,1} +\frac{R}{n-1}\right) 
\]
depending only on $s$. We have completed the proof of this lemma.
\end{proof}

For any $n$-dimensional ($ n\geq 3$) Riemannian manifold with harmonic curvature, the Ricci tensor satisfies the Codazzi equation. As described on Codazzi tensors by Derdzi\'{n}ski \cite{De,De82},
for any point $x$ in $M$, let $E_{\rm Ric}(x)$ be the number of distinct eigenvalues of ${\rm Ric}_x$,
and set 
\[
M_{\rm Ric} = \{  x \in M \ | \ E_{\rm Ric} {\rm \ is \ constant \ in \ a \ neighborhood \ of \ } x \}.
\] 
Then $M_{\rm Ric}$ is an open dense subset of $M$ and in each connected component of $M_{\rm Ric}$, the eigenvalues are well-defined and differentiable functions.
A Riemannian manifold with harmonic curvature is real analytic in harmonic coordinates \cite{De}, i.e., $f$ is real analytic (in harmonic coordinates).
Then if $f$ is not constant, $\{\nabla f \neq 0\}$ is open and dense in $M$.  
			
Therefore, in some neighborhood $U$ of each point in $M_{\rm Ric} \cap \{ \nabla f \neq 0  \}$, the number of distinct Ricci-eigenvalues is constant, and we assume there are $m$ distinct Ricci-eigenvalues of multiplicities
$r_{1}, r_{2}, \cdots, r_{m}$, respectively, excluding the one with corresponding to the eigenvector $\frac{\nabla f }{|\nabla f | }$.
Here $1+r_{1}+r_{2}+ \cdots+ r_{m}=n $.
Therefore, we can choose an orthonormal frame field
$\{E_1= \frac{\nabla f}{|\nabla f| }, E_2, \cdots,  E_n\}$,
with the dual $\{\omega_1= \frac{d f}{|\nabla f| }, \omega_2, \cdots, \omega_n\}$ 
such that 
\begin{equation}\label{3.2}
R_{ij}=\lambda_i \delta_{ij},
\end{equation}
\[
\lambda_{2}=\cdots=\lambda_{r_1+1}, \quad
\lambda_{r_1+2}=\cdots=\lambda_{r_1+r_2+1},\quad
\cdots  \quad 
\lambda_{r_1+r_2+\cdots+r_{m-1}+2}=\cdots=\lambda_{n}, 
\]
and $\lambda_{2},\, \lambda_{r_1+2},\,\cdots,\,	\lambda_{r_1+r_2+\cdots+r_{m-1}+2}$
are distinct. Next, we need the following lemma to prove that all Ricci-eigenfunctions $ \lambda_i $ $(i=1, \cdots ,n)$ depend only on the local variable $s$. 

\begin{lemma} \label{lemma3.2}
Let $(M^n, g, f)$, $(n\geq 4)$, be an $n$-dimensional vacuum static space with harmonic curvature satisfying \eqref{static}. For the above local frame field $\{ E_i \}$ in $M_{\rm Ric} \cap \{ \nabla f \neq 0 \}$, we have
	\begin{equation}\label{3.3}
f''=f\lambda_1-\frac{R}{n-1},
	\end{equation}
	\begin{equation}\label{3.4}
	\omega_{1a}= \xi_a \omega_{a}
	\end{equation}
	and 
	\begin{equation}\label{3.5}
	R_{aa,1}
	=\left(\lambda_1-\lambda_a \right)\xi_a,
	\end{equation}
	where 
	\begin{equation} \label{3.6}
	\xi_a:= \frac{1}{ |\nabla f|} \left( f\lambda_a-\frac{R}{n-1} \right).
	\end{equation} 
\end{lemma}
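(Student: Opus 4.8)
The plan is to extract the three displayed identities directly from the structure equations once we exploit that $E_1 = \nabla f / |\nabla f|$ is a Ricci-eigenvector (Lemma \ref{lemma3.1}(i)) together with the harmonic-curvature consequence \eqref{harcon}. Equation \eqref{3.3} is essentially a restatement of the computation already done at the end of Lemma \ref{lemma3.1}: from \eqref{static} we have $f_{1,1} = f(R_{11} - \frac{R}{n-1})$, and writing $f_{1,1} = f''$ (valid since, by Lemma \ref{lemma3.1}(iii), $f$ is a function of $s$ alone and $ds = \omega_1$) and $R_{11} = \lambda_1$ gives \eqref{3.3} immediately. So the only real content is \eqref{3.4} and \eqref{3.5}.

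\textbf{Deriving \eqref{3.4}.} First I would record the Hessian of $f$ in the chosen frame. Since $R_{1a} = 0$ and $R_{ab}$ is diagonal, \eqref{static} yields $f_{i,j} = f(\lambda_i - \frac{R}{n-1})\delta_{ij}$, in particular $f_{1,a} = 0$ and $f_{a,b} = f(\lambda_a - \frac{R}{n-1})\delta_{ab}$. On the other hand, from the definition \eqref{2.5} of the covariant derivative, $df_1 + f_j \omega_{j1} = f_{1,j}\omega_j$; using $f_j = f_1 \delta_{1j}$ this becomes $df_1 - f_1 \omega_{1j} \ = \ f_{1,j}\omega_j$ wait — more carefully, $df_1 = f_{1,j}\omega_j - f_j\omega_{j1} = f_{1,j}\omega_j$ since $f_j\omega_{j1} = f_1\omega_{11} = 0$. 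Hence $f_1$ is a function of $s$ only (the $\omega_a$-components vanish), consistent with $|\nabla f| = f_1$ depending only on $s$. Next, apply \eqref{2.5} to $f_a$ (which is identically $0$): $0 = df_a + f_j\omega_{ja} = f_1\omega_{1a} + \sum_b f_b \omega_{ba} = f_1 \omega_{1a}$. That just gives $\omega_{1a}$ has no... no, that's wrong because $f_a \equiv 0$ only on the leaf, not off it — I should instead use $f_{1,a} = 0$ read off \eqref{2.5}: expanding, $f_{1,a}\omega_a$ is the $\omega_a$ part of $df_1 + f_j\omega_{j1}$; since $df_1$ is a multiple of $\omega_1$, we need the $\omega_a$ component of $f_j\omega_{j1} = f_1\omega_{11}$, which is zero — so this is automatic and gives nothing. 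The correct route: compute $f_{a,b}$ via \eqref{2.6}, $f_{a,b}\omega_b = df_a + f_{c}\omega_{ca} + f_1\omega_{1a}$. Since $f_a = 0$ identically... this is only true along $N$. I'd instead argue: $\nabla^2 f = f(\mathrm{Rc} - \frac{R}{n-1}g)$, and the shape operator of the level sets of $f$ (equivalently of $s$) is $\nabla_{E_a}\nabla s = \omega_{1a}(E_b)E_b$; but $\nabla s = \nabla f/|\nabla f| = f_1^{-1}\nabla f$, so $\nabla_{E_a}\nabla s = f_1^{-1}\nabla_{E_a}\nabla f$ (the $E_a$-derivative of $f_1^{-1}$ contributes a multiple of $\nabla f \perp V$... actually it contributes along $E_1$, which is fine). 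Thus $\omega_{1a}(E_b) = f_1^{-1} f_{a,b} = f_1^{-1}f(\lambda_a - \frac{R}{n-1})\delta_{ab}$, which is exactly \eqref{3.4} with $\xi_a = f_1^{-1}f(\lambda_a - \frac{R}{n-1})$, i.e.\ \eqref{3.6}. The key simplification making $\omega_{1a}$ a \emph{multiple of $\omega_a$} (no cross terms, no $\omega_1$ term) is that $\mathrm{Rc}$ is diagonal in this frame and $R_{1a}=0$; I should state this cleanly rather than fumble with indices.

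\textbf{Deriving \eqref{3.5}.} This is the Codazzi equation for $\mathrm{Rc}$ contracted appropriately. Since the curvature is harmonic, $R_{ij,k} = R_{ik,j}$ (noted in Lemma \ref{lemma3.1}'s proof). I would compute $R_{aa,1}$ from the definition \eqref{2.9}: $R_{aa,k}\omega_k = dR_{aa} + 2R_{ca}\omega_{ca} + 2R_{1a}\omega_{1a} = dR_{aa} + 2(\lambda_a - \lambda_a)\omega_{aa} + 0$ — wait, the off-diagonal $R_{ca}$ with $c$ in a different eigenspace need care: $R_{ca}\omega_{ca} = \lambda_a \delta_{ca}\omega_{ca}$... summing, $\sum_c R_{ca}\omega_{ca}$, but $\omega_{aa}=0$, so this contributes $\sum_{c\neq a} \lambda_{(\cdot)}\delta_{ca}\omega_{ca} = 0$. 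Hence $R_{aa,k}\omega_k = d\lambda_a$ — but $\lambda_a$ need not (yet) be a function of $s$ only, so I'd instead use the Codazzi symmetry differently: consider $R_{1a,a} = R_{1a... }$, hmm. Cleaner: use $R_{aa,1} = R_{a1,a}$ (Codazzi) and expand $R_{a1,a}$ from \eqref{2.9}. We have $R_{a1,k}\omega_k = dR_{a1} + R_{c1}\omega_{ca} + R_{ac}\omega_{c1}$. Since $R_{a1} \equiv 0$, $dR_{a1} = 0$; $R_{c1}\omega_{ca} = R_{11}\omega_{1a} = \lambda_1\omega_{1a}$; and $R_{ac}\omega_{c1} = \lambda_a\delta_{ac}\omega_{c1} = \lambda_a\omega_{a1} = -\lambda_a\omega_{1a}$. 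So $R_{a1,k}\omega_k = (\lambda_1 - \lambda_a)\omega_{1a} = (\lambda_1 - \lambda_a)\xi_a\omega_a$ by \eqref{3.4}. Taking the $\omega_a$-component, $R_{a1,a} = (\lambda_1 - \lambda_a)\xi_a$, and by Codazzi $R_{aa,1} = R_{a1,a}$, giving \eqref{3.5}.

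\textbf{Main obstacle.} The computations are all short, but the genuine subtlety — and the step I would be most careful about — is the index bookkeeping in \eqref{3.4}: verifying that $\omega_{1a}$ really has \emph{no} $\omega_1$-component and \emph{no} cross $\omega_b$-component ($b\neq a$), which relies on the full diagonal form of $\nabla^2 f$ in this adapted frame. The cleanest way to phrase it is to note that $\nabla^2 f(E_1, E_a) = f_{1,a} = 0$ kills the $\omega_1$ part, and $\nabla^2 f(E_a, E_b) = f\,\delta_{ab}(\lambda_a - \frac{R}{n-1})$ being diagonal (with the constant-multiplicity frame chosen so that $\mathrm{Rc}$ is diagonal) forces $\omega_{1a}(E_b)$ proportional to $\delta_{ab}$; then $\xi_a$ is just the proportionality constant, and plugging back into the Codazzi identity $R_{aa,1} = R_{a1,a}$ yields \eqref{3.5} with essentially no further work.
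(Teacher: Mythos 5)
Your proof is correct and follows essentially the same route as the paper: \eqref{3.3} from $f_{1,1}=f''$ together with \eqref{static}, \eqref{3.4} from the diagonal form of the Hessian in the adapted frame, and \eqref{3.5} by expanding $R_{1a,a}$ via \eqref{2.9} (using $R_{1a}=0$ and \eqref{3.4}) and invoking the Codazzi symmetry $R_{aa,1}=R_{1a,a}$. One clarification on the step you abandoned: $f_a\equiv 0$ holds on all of $U$, not merely on a leaf, because $E_1=\nabla f/|\nabla f|$ throughout $U$; the equation to write from \eqref{2.5} is $f_{a,j}\omega_j=df_a+f_j\omega_{ja}=f_1\omega_{1a}$ --- the left-hand side is the covariant derivative $f_{a,j}\omega_j$, not $0$ --- and substituting $f_{a,j}=f\left(\lambda_a-\tfrac{R}{n-1}\right)\delta_{aj}$ is exactly the paper's one-line derivation of \eqref{3.4}, so your detour through the shape operator, while valid, was unnecessary. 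Finally, note that \eqref{3.3} and \eqref{3.6} as printed carry a typo (the last term should be $\tfrac{fR}{n-1}$, consistent with \eqref{3.13} and \eqref{3.14}); your version with the factor $f$ is the correct one.
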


\begin{proof}	
   It follows from \eqref{2.5} that for $2\leq a \leq n$,  
	\[
	f_{1,1}=f'' \quad  {\rm and} \quad  f'\omega_{1a}=f_{a,j}\omega_{j}.
	\]
  Substituting $f_{i,j}=\left( f\lambda_i-\frac{R}{n-1} \right)\delta_{ij}$ into the above equations, we immediately have \eqref{3.3} and \eqref{3.4}.
  Directly calculation by using of \eqref{2.9} gives us 
	\[
	R_{11,1}=\lambda_1'  \quad {\rm and} \quad 
	R_{1a,a}=\left(\lambda_1-\lambda_a \right)\xi_a.
	\]
	Hence harmonic condition $R_{aa,1}=R_{1a,a}$ yields \eqref{3.7},
	and we have completed the proof of this lemma.
\end{proof}

Making use of the above lemmas, it is easy to prove the following lemma,
which was proved by Kim-Shin \cite{KS} for the four dimensional case.
\begin{lemma}
	\label{lemma3.3}
	Let $(M^n, g, f)$, $(n\geq 4)$, be an $n$-dimensional vacuum static space with harmonic curvature satisfying \eqref{static}.
	For the above local frame field $\{ E_i \}$ in $M_{\rm Ric} \cap \{ \nabla f \neq 0 \}$,
	the Ricci eigenfunctions $ \lambda_i $ $(i=1, \cdots ,n)$
	depend only on the local variable $s$, so do the functions $\xi_a$ for $2 \leq a\leq n$.
\end{lemma}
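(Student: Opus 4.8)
The plan is to show that each Ricci-eigenfunction $\lambda_i$ has vanishing derivatives in all directions tangent to the integrable leaves $N$, i.e.\ $E_b(\lambda_i)=0$ for $2\le b\le n$, which together with Lemma \ref{lemma3.1}(iii) (already $\lambda_1=R_{11}$ depends only on $s$) and the coordinate description $g=ds^2+\sum_a\omega_a^2$ forces each $\lambda_i$ to be a function of $s$ alone. Once this is established, formula \eqref{3.6} immediately gives that $\xi_a=\frac1{|\nabla f|}(f\lambda_a-\frac R{n-1})$ depends only on $s$, since $f$ and $|\nabla f|=f'$ depend only on $s$ by Lemma \ref{lemma3.1}.

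First I would exploit the Codazzi property of the Ricci tensor, $R_{ij,k}=R_{ik,j}$, in the adapted frame \eqref{3.2} where $R_{ij}=\lambda_i\delta_{ij}$. The standard computation for a Codazzi tensor in a frame that diagonalizes it (as in Derdzi\'nski \cite{De,De82}) shows: if $E_i,E_j$ belong to eigenspaces with \emph{distinct} eigenvalues $\lambda_i\ne\lambda_j$, then $E_i(\lambda_j)=0$; and within a single eigenspace the eigenvalue is constant along the directions orthogonal to that eigenspace. Concretely, writing out \eqref{2.9} for $R_{ij,k}$ and using $\omega_{jk}$ one gets relations of the form $(\lambda_i-\lambda_j)\,\Gamma$-terms $=E(\lambda)$-terms; comparing the independent components yields $E_b(\lambda_a)=0$ whenever $E_b$ lies in an eigenspace different from that of $E_a$. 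The only directions not yet controlled this way are those \emph{within} the eigenspace of a given $\lambda_a$, and the direction $E_1$; but $E_1(\lambda_a)=\lambda_a'$ is a function of $s$ trivially, and the derivative of $\lambda_a$ within its own eigenspace is handled as follows.

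For the within-eigenspace directions I would use the contracted second Bianchi identity \eqref{2.13}, $\sum_j R_{ji,j}=\tfrac12 R_i$, together with the fact that $R$ is constant (Bourguignon \cite{Bo}, Fischer-Marsden \cite{FM}) so $R_i=0$, hence $\sum_j R_{ji,j}=0$. Fixing $i$ in the eigenspace of some $\lambda_a$ and expanding $R_{ji,j}$ via \eqref{2.9} and $R_{jk}=\lambda_j\delta_{jk}$, the terms split into a contribution $E_i(\lambda_i)$ (the within-eigenspace derivative we want) plus terms of the shape $(\lambda_j-\lambda_i)\langle\nabla_{E_j}E_j,E_i\rangle$ and $(\lambda_i-\lambda_j)\langle\nabla_{E_i}E_j,E_j\rangle$; these connection coefficients are controlled by \eqref{3.4} when one index is $1$, and by the Codazzi relations otherwise. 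Putting the pieces together shows $E_i(\lambda_a)=0$ for the remaining directions as well, so $\lambda_a$ depends only on $s$.

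The main obstacle I expect is the bookkeeping of connection forms $\omega_{ab}$ between two eigenvectors $E_a,E_b$ in the \emph{same} eigenspace: the Codazzi argument does not directly constrain these, and they appear when one differentiates $R_{ab}=\lambda_a\delta_{ab}$ along leaf directions. The resolution is that the \emph{trace} of such terms over the eigenspace is all that enters \eqref{2.13}, and the antisymmetry $\omega_{ab}+\omega_{ba}=0$ kills the diagonal contribution, so the potentially dangerous terms cancel; one must be careful to organize the sum so that this cancellation is visible. A secondary subtlety is that the argument is a priori only valid on the open dense set $M_{\rm Ric}\cap\{\nabla f\ne0\}$ where the multiplicities $r_1,\dots,r_m$ are locally constant, but this suffices for the later ODE analysis and the real-analyticity remarks already recorded before the statement. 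Modulo these points, the derivation of \eqref{3.5}, \eqref{3.4} and \eqref{3.3} in Lemma \ref{lemma3.2} supplies all the remaining identities needed to close the argument.
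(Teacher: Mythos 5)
Your argument rests on two claims, and both fail. First, you assert that for a Codazzi tensor diagonalized in an adapted frame, $E_b(\lambda_a)=0$ whenever $E_b$ lies in an eigenspace different from that of $E_a$. That is not what the Codazzi identity gives. Writing $R_{ij,k}=E_k(\lambda_i)\delta_{ij}+(\lambda_i-\lambda_j)\,\omega_{ij}(E_k)$ and using $R_{aa,b}=R_{ab,a}$, one only obtains
\[
E_b(\lambda_a)=(\lambda_a-\lambda_b)\,\omega_{ab}(E_a)=(\lambda_a-\lambda_b)\,\langle\nabla_{E_a}E_a,E_b\rangle,
\]
which is Derdzi\'{n}ski's umbilicity relation, not a vanishing statement; the coefficient $\langle\nabla_{E_a}E_a,E_b\rangle$ is not controlled by the Codazzi property. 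Indeed the very same identity applied to the pair $(E_1,E_a)$ yields $E_1(\lambda_a)=(\lambda_1-\lambda_a)\xi_a$, i.e.\ \eqref{3.5}, which is generically nonzero, so the mechanism you invoke cannot produce vanishing across distinct eigenspaces. (Derdzi\'{n}ski's theorem only guarantees constancy of an eigenvalue along its \emph{own} eigendistribution, and only when that eigendistribution has dimension at least two.)

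Second, the contracted Bianchi step is vacuous. For a Codazzi Ricci tensor, $\sum_j R_{ij,j}=\sum_j R_{jj,i}=E_i(R)$, so \eqref{2.13} merely reproduces $E_i(R)=0$, which is already known. Expanding $\sum_j R_{aj,j}$ in the frame gives $E_a(\lambda_a)+\sum_b(\lambda_a-\lambda_b)\,\omega_{ab}(E_b)=0$, which reintroduces exactly the uncontrolled coefficients $\omega_{ab}(E_b)$ with $\lambda_b\neq\lambda_a$ rather than eliminating them; the antisymmetry $\omega_{ab}=-\omega_{ba}$ does not pair these terms, since the sum runs over $b$ with $a$ fixed, so the cancellation you hope for does not occur. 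Consequently neither the cross-eigenspace derivatives $E_b(\lambda_a)$ for $\lambda_b\neq\lambda_a$ nor the own-direction derivative $E_a(\lambda_a)$ when $\lambda_a$ has multiplicity one is shown to vanish, and these are precisely the content of the lemma. Be aware that the other natural identities (the Ricci identity applied to $\mathrm{Hess}\,f$, the second Bianchi identity applied to $R_{1a1b}$, the hypersurface Codazzi equation of the level sets of $f$) all reproduce the same single relation displayed above, so genuinely new input is required; the paper itself gives no proof here, deferring to the four-dimensional argument of Kim--Shin, and your proposal does not supply the missing ingredient.
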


Subsequently, we will obtain the local structure of the metric for an $n$-dimensional vacuum static space with harmonic curvature.
First, we denote $[a]=\{b| \lambda_{b}=\lambda_{a}~ {\rm and } ~ b \neq1\}$ for $2\leq a\leq n$
and make the following conventions on the range of indices:
\[
2 \leq a, b, c \cdots\leq n  \quad {\rm and} \quad  2 \leq \alpha, \beta,  \cdots\leq n
\]
where $[a]=[b]$, $[\alpha]=[\beta]$ and $[a]\neq[\alpha]$.

\begin{lemma} \label{lemma3.3-}
	Let $(M^n, g, f)$, $(n\geq 4)$, be an $n$-dimensional vacuum static space  with harmonic curvature satisfying \eqref{static}.
	For the above local frame field $\{ E_i \}$ in $M_{\rm Ric} \cap \{ \nabla f \neq 0 \}$, we see that
	\begin{equation}\label{3.7}
	\omega_{a\alpha}=0,
	\end{equation}
	\begin{equation}\label{3.8}
	R_{1a1b}=-\left( \xi'_a+\xi^2_a\right) \delta_{ab}
	\end{equation}
	and 
	\begin{equation}\label{3.9}
	R_{a\alpha b \beta}=-\xi_a\xi_\alpha \delta_{ab}\delta_{\alpha\beta}.
	\end{equation}
\end{lemma}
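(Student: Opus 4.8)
The plan is to compute the connection forms and curvature of the orthonormal frame $\{E_i\}$ directly, exploiting that the eigenfunctions $\lambda_i$ and $\xi_a$ depend only on $s$ (Lemma \ref{lemma3.3}) and that $R_{ij}=\lambda_i\delta_{ij}$ with block structure. The starting point is the Codazzi equation for the Ricci tensor, $R_{ij,k}=R_{ik,j}$, written out in the adapted frame using \eqref{2.9}. First I would record what \eqref{2.9} gives: $dR_{ij}+R_{kj}\omega_{ki}+R_{ik}\omega_{kj}=R_{ij,k}\omega_k$. With $R_{ij}=\lambda_i\delta_{ij}$ this becomes $\delta_{ij}d\lambda_i+(\lambda_j-\lambda_i)\omega_{ij}=R_{ij,k}\omega_k$ (no sum), so for $i\neq j$ one gets $(\lambda_j-\lambda_i)\omega_{ij}=\sum_k R_{ij,k}\omega_k$. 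In particular, for indices $a,\alpha$ lying in two \emph{different} eigenspace blocks ($\lambda_a\neq\lambda_\alpha$), we obtain $\omega_{a\alpha}=\frac{1}{\lambda_\alpha-\lambda_a}\sum_k R_{a\alpha,k}\omega_k$.

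To prove \eqref{3.7} I would feed this expression back through the Codazzi symmetry. Writing $(\lambda_\alpha-\lambda_a)\omega_{a\alpha}=R_{a\alpha,1}\omega_1+R_{a\alpha,b}\omega_b+R_{a\alpha,\beta}\omega_\beta$ and comparing the various cyclic forms of $R_{a\alpha,k}=R_{ak,\alpha}$: the component $R_{a\alpha,1}=R_{a1,\alpha}$, and $R_{a1,\alpha}$ can be read off from \eqref{3.4}–\eqref{3.5} together with \eqref{2.9} (it is governed by $\omega_{1a}=\xi_a\omega_a$, which mixes only indices within a block). Since the $\lambda$'s and $\xi$'s are functions of $s$ alone, the only surviving covariant derivatives of the Ricci tensor in directions transverse to $E_1$ must vanish, forcing the coefficient of each $\omega_b,\omega_\beta$ in $(\lambda_\alpha-\lambda_a)\omega_{a\alpha}$ to be zero, and the $\omega_1$-coefficient is $R_{a\alpha,1}$ which one checks is also zero because $R_{a\alpha}\equiv 0$ is constant along $L$ as well (it is identically zero). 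Hence $\omega_{a\alpha}=0$. I would also need $\omega_{ab}$ for $a,b$ in the same block: Codazzi gives $(\lambda_b-\lambda_a)\omega_{ab}=0$ with $\lambda_a=\lambda_b$, which is vacuous, so $\omega_{ab}$ is only constrained by the structure equation $d\omega_a=\omega_j\wedge\omega_{ja}$ and is tangent to the fibre $N$; this is exactly what is needed so that the block decomposition is parallel.

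Once \eqref{3.7} is in hand, \eqref{3.8} and \eqref{3.9} follow by evaluating the curvature 2-form \eqref{2.2}, $-\tfrac12 R_{ijkl}\omega_k\wedge\omega_l=d\omega_{ij}-\omega_{ik}\wedge\omega_{kj}$, on the relevant index pairs. For \eqref{3.8} take $i=1,j=a$: $d\omega_{1a}=d(\xi_a\omega_a)=\xi_a'\,ds\wedge\omega_a+\xi_a\,d\omega_a=\xi_a'\,\omega_1\wedge\omega_a+\xi_a(\omega_1\wedge\omega_{1a}+\sum_b\omega_b\wedge\omega_{ba})$, and $\xi_a\omega_1\wedge\omega_{1a}=\xi_a^2\,\omega_1\wedge\omega_a$; the cross term $\omega_{1k}\wedge\omega_{ka}$ contributes $\omega_{1b}\wedge\omega_{ba}=\xi_b\omega_b\wedge\omega_{ba}$ with $b\in[a]$, which cancels against the $d\omega_a$ piece after using $\xi_b=\xi_a$ on a block. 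Collecting the $\omega_1\wedge\omega_a$ coefficient yields $-R_{1a1a}=\xi_a'+\xi_a^2$; the off-diagonal $R_{1a1b}$ ($a\neq b$) vanish because no $\omega_1\wedge\omega_b$ term survives. For \eqref{3.9} take $i=a$ in one block, $j=\alpha$ in another: since $\omega_{a\alpha}=0$, $d\omega_{a\alpha}=0$, and $\omega_{ak}\wedge\omega_{k\alpha}$ reduces, after discarding all vanishing mixed-block connection forms, to $\omega_{a1}\wedge\omega_{1\alpha}=(-\xi_a\omega_a)\wedge(\xi_\alpha\omega_\alpha)=-\xi_a\xi_\alpha\,\omega_a\wedge\omega_\alpha$, giving $-\tfrac12 R_{a\alpha b\beta}\omega_b\wedge\omega_\beta=\xi_a\xi_\alpha\,\omega_a\wedge\omega_\alpha$, i.e. $R_{a\alpha b\beta}=-\xi_a\xi_\alpha\delta_{ab}\delta_{\alpha\beta}$.

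The main obstacle is the proof of \eqref{3.7}: correctly tracking which components $R_{ij,k}$ are forced to vanish requires carefully combining the Codazzi symmetry with the $s$-dependence of all eigendata and with \eqref{3.4}, and one must be attentive to the case distinctions between same-block and different-block indices (and the role of the $E_1$-direction). Everything after that — the two curvature identities — is a mechanical application of the structure equations, so the real content, and the only place a subtle error could hide, is establishing that the connection genuinely respects the eigenspace splitting transverse to $\nabla f$.
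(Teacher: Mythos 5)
Your overall strategy is the standard one (the paper itself only cites Lemma~3.4 of \cite{L} here), and your derivations of \eqref{3.8} and \eqref{3.9} from the structure equation \eqref{2.2} are correct \emph{once \eqref{3.7} is granted}: the cancellation of the $\xi_b\,\omega_b\wedge\omega_{ba}$ terms via $\xi_b=\xi_a$ on a block, and the reduction of $\omega_{ak}\wedge\omega_{k\alpha}$ to the single term $\omega_{a1}\wedge\omega_{1\alpha}$, are exactly right. The problems are all in the proof of \eqref{3.7}, which you correctly identify as the real content. First, the justification you give for killing the $\omega_b$- and $\omega_\beta$-coefficients is circular as written: from \eqref{2.9}, $R_{a\alpha,k}=(\lambda_a-\lambda_\alpha)\,\omega_{a\alpha}(E_k)$ is \emph{not} a derivative of an eigenvalue (and the remark that ``$R_{a\alpha}\equiv 0$ is constant along $L$'' proves nothing, since a covariant derivative of a tensor is not the derivative of its component function); it is precisely the unknown. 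The vanishing has to be obtained by using the Codazzi symmetry to relocate the Ricci index pair inside a single block, where the connection term drops out and only a derivative of an eigenvalue survives: $R_{a\alpha,\beta}=R_{\alpha\beta,a}=\delta_{\alpha\beta}E_a(\lambda_\alpha)+(\lambda_\alpha-\lambda_\beta)\omega_{\alpha\beta}(E_a)=0$ since $\lambda_\alpha=\lambda_\beta$ and $\lambda_\alpha$ depends only on $s$; likewise $R_{a\alpha,b}=R_{ab,\alpha}=0$ and $R_{a\alpha,1}=R_{a1,\alpha}=(\lambda_a-\lambda_1)\omega_{a1}(E_\alpha)=0$ by \eqref{3.4}. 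You gesture at this (``cyclic forms of $R_{a\alpha,k}=R_{ak,\alpha}$'') but the argument you actually state is the circular one.

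Second, and more seriously, your case list for the components of $\omega_{a\alpha}$ only covers $E_1$, $E_b$ with $b\in[a]$, and $E_\beta$ with $\beta\in[\alpha]$. At this stage of the paper the number of distinct eigenvalues among $\lambda_2,\dots,\lambda_n$ is not yet known to be at most two (that is Theorem~\ref{thm4.2}, which is proved \emph{later} and uses the present lemma), so $\omega_{a\alpha}(E_p)$ for $p$ in a third block $[p]\neq[a],[\alpha]$ must also be shown to vanish, and it does not succumb to the same trick: writing $u=\omega_{a\alpha}(E_p)$, $v=\omega_{ap}(E_\alpha)$, $w=\omega_{\alpha p}(E_a)$, the Codazzi relations for the triple $(a,\alpha,p)$ yield only the proportionality
\begin{equation*}
(\lambda_a-\lambda_\alpha)\,u=(\lambda_a-\lambda_p)\,v=(\lambda_\alpha-\lambda_p)\,w,
\end{equation*}
which is one relation among three unknowns and does not force $u=v=w=0$; the Ricci identity $f_{i,jk}-f_{i,kj}=f_lR_{lijk}$ applied to the diagonal Hessian, together with $R_{1a\alpha p}=0$ from \eqref{harcon}, reproduces exactly the same relation (since $\mu_i-\mu_j=f(\lambda_i-\lambda_j)$) and gives nothing new. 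So your proof of \eqref{3.7} is complete only in the two-block configuration; the genuinely three-block components require an additional idea, and this is the one place where your plan, as written, does not close.
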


\begin{proof}	
	Using the same method to that of Lemma 3.4 in \cite{L}, we can easily prove this lemma.
\end{proof}

At last, we are ready to derive the integrability conditions and prove the local structure of the metric as a multiply warped product.
 
\begin{theorem} \label{mulwar}
	Let $(M^n, g, f)$, $(n\geq 4)$, be an $n$-dimensional vacuum static space with harmonic curvature satisfying \eqref{static}.
	For each point $p \in M_{\rm Ric} \cap \{ \nabla f \neq 0  \}$, there exists a neighborhood $U$ of $p$ 
	such that 
	\[
	U = L\times\,  _{h_1}L_1\times \cdots \times\, _{h_l}L_l \times\, _{h_{l+1}}N_{l+1}\times\cdots \times\, _{h_m}N_{m}
	\]
	is a multiply warped product furnished with the metric 
	\begin{equation} \label{3.10}
	g= ds^2 + h^2_1(s)  dt_1^2 + \cdots 
	+ h^2_l(s) dt_l^2+ h^2_{l+1}(s) \tilde{g}_{l+1}+ \cdots 
	+h^2_{m}(s) \tilde{g}_{m},
	\end{equation}
	where $h_j(s)$ are smooth positive functions for $1\leq j \leq m$,
	$dim ~L_\nu$=1 for $1\leq \nu \leq l$, and 
	$(N_\mu, \tilde{g}_{\mu})$ is an $r_\mu$-dimensional Einstein manifold 
	of the Einstein constant $(r_\mu-1)k_\mu$ for $l+1\leq \mu \leq m$.
	
	Moreover, 
	let $\lambda_i$ $(i=1,\cdots, n)$ be the Ricci-eigenvalues, $\lambda_1$ being the Ricci-eigenvalue with respect to the gradient vector $\nabla f$,
	and functions $\xi_a$ be given by \eqref{3.6}.	
	Then the following integrability conditions hold
	\begin{equation}\label{3.11}
	\xi'_a+\xi^2_a=\lambda_a-\frac{R}{n-1},
	\end{equation}
	\begin{equation}\label{3.12}
	\lambda'_a-\left(\lambda_1-\lambda_a \right)\xi_a=0,
	\end{equation}
	\begin{equation}\label{3.13}
		\lambda_{1}=f^{-1}\left( f''+\frac{fR}{n-1}\right)=-\sum_{i=2}^n\left(\xi'_i+\xi^2_i\right),
	\end{equation}
	and
	\begin{equation}\label{3.14}
	\begin{aligned}
	\lambda_{a}&=f^{-1}\left( f'\xi_a+\frac{fR}{n-1}\right)\\
	&=- \xi'_a-\xi_a \sum^{n}_{i=2} \xi_i +( r-1)\frac{k}{h^2},
	\end{aligned}
	\end{equation}
	Here $2\leq a\leq n$ and $\xi_a=h'/h$;  in \eqref{3.14}, when $a=2, \dots, l+1$,  then 
	$r=1$ and $h=h_{a-1}$;
	when $a\in \left[ l+r_{l+1}+\cdots+r_{\mu-1} +2\right]$ ,  then 
	$r=r_\mu$, $h=h_{\mu} $ and $k=k_{\mu} $.
\end{theorem}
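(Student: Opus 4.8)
The plan is to build on Lemmas~\ref{lemma3.1}--\ref{lemma3.3-}, which already give us the key ingredients: the frame $\{E_1=\nabla f/|\nabla f|,E_2,\dots,E_n\}$ diagonalizes the Ricci tensor, the connection forms satisfy $\omega_{1a}=\xi_a\omega_a$ and $\omega_{a\alpha}=0$ across distinct eigenspaces, and all the functions $f,\lambda_i,\xi_a$ depend only on $s$. First I would establish the warped-product structure of the metric. Working in a connected component of $M_{\rm Ric}\cap\{\nabla f\ne0\}$, the relation $ds=df/|\nabla f|$ identifies $s$ as an arc-length coordinate along the geodesic integral curves of $E_1$, so $g=ds^2+g_s$ where $g_s$ is a metric on the leaf $N$ of the integrable distribution $V={\rm Span}\{E_2,\dots,E_n\}$. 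From $\omega_{1a}=\xi_a\omega_a$ and $\xi_a=\xi_a(s)$ (together with $\omega_{a\alpha}=0$, which says $V$ decomposes $L^2$-orthogonally into the $m$ Ricci-eigendistributions), the Lie derivative of the $a$-component of the metric along $\partial_s$ is $2\xi_a$ times that component; integrating $\xi_a=(\log h)'$ with a function $h=h_\mu(s)$ depending only on the block $[a]$ gives $g_s=\sum_\mu h_\mu^2(s)\,g_\mu$ where each $g_\mu$ is an $s$-independent metric on the $\mu$-th factor. This is the standard de~Rham/Hiepko-type argument; the block-diagonality $\omega_{a\alpha}=0$ is exactly what lets the factors split off, and each one-dimensional block ($1\le\nu\le l$) contributes an $h_\nu^2\,dt_\nu^2$ term while each higher-dimensional block contributes $h_\mu^2\tilde g_\mu$. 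I would then verify that each $(N_\mu,\tilde g_\mu)$ is Einstein: this follows by computing the Ricci curvature of a multiply warped product and comparing with \eqref{3.2}, since the intrinsic Ricci tensor of $\tilde g_\mu$ must be proportional to $\tilde g_\mu$ (the constant is forced to be $(r_\mu-1)k_\mu$ by the normalization of the warped-product curvature formula), exactly as in the analogous Ricci-soliton argument in \cite{L}.

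Next I would derive the integrability conditions. Equation \eqref{3.11} comes from combining \eqref{3.8}, which reads $R_{1a1b}=-(\xi_a'+\xi_a^2)\delta_{ab}$, with the Ricci identity $\lambda_a=R_{aa}=\sum_k R_{akak}=R_{a1a1}+\sum_{b\ne1}R_{abab}$ and the static equation; more directly, differentiating $\xi_a=|\nabla f|^{-1}(f\lambda_a-R/(n-1))$ from \eqref{3.6} along $s$, using $|\nabla f|'=f_{1,1}=f''$ and \eqref{3.3}, and then using \eqref{3.5} to eliminate $\lambda_a'$, yields $\xi_a'+\xi_a^2=\lambda_a-R/(n-1)$. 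Equation \eqref{3.12} is just \eqref{3.5} together with $R_{aa,1}=\lambda_a'$ from \eqref{2.9} (since $\omega_{1a}=\xi_a\omega_a$ gives $dR_{aa}=\lambda_a'\omega_1$ plus connection terms that cancel by $\omega_{a\alpha}=0$). For \eqref{3.13}: the first equality is \eqref{3.3} rearranged via $f''=f\lambda_1-R/(n-1)$; the second follows from the curvature identity $\lambda_1=R_{11}=\sum_{a}R_{1a1a}=-\sum_{a=2}^n(\xi_a'+\xi_a^2)$ using \eqref{3.8}. For \eqref{3.14}: the first equality is $f_{1,a}\cdot 0$... more precisely it is $\lambda_a=f^{-1}(f\lambda_a)$ rewritten using $f'\xi_a=f\lambda_a-fR/(n-1)$ from \eqref{3.6} (recalling $f'=|\nabla f|$); the second equality comes from computing $\lambda_a=R_{aa}$ in the multiply warped product: the standard Ricci formula for $g=ds^2+\sum h_\mu^2\tilde g_\mu$ gives, for an index $a$ in the $\mu$-th block, $\lambda_a=-\frac{h_\mu''}{h_\mu}-\sum_{b\ne a}\frac{h_\mu'}{h_\mu}\frac{h_{[b]}'}{h_{[b]}}+(r_\mu-1)\frac{k_\mu}{h_\mu^2}$, which upon substituting $\xi_i=h_{[i]}'/h_{[i]}$ and $\xi_a'=h_\mu''/h_\mu-\xi_a^2$ becomes $-\xi_a'-\xi_a\sum_{i=2}^n\xi_i+(r_\mu-1)k_\mu/h_\mu^2$.

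The main obstacle I anticipate is the bookkeeping in passing from the pointwise frame computations of Lemmas~\ref{lemma3.2}--\ref{lemma3.3-} to a genuine product decomposition of a \emph{neighborhood}: one must check that the integral leaves of the various eigendistributions are themselves products (not merely that the distributions are integrable), that the warping functions $h_\mu$ are globally well-defined positive functions of $s$ on the neighborhood rather than just locally-defined up to constants, and that the identification $U\cong L\times{}_{h_1}L_1\times\cdots$ is a genuine isometry. This is where one invokes the de~Rham-type splitting theorem for the leaves $N$ (the distributions $V_\mu$ are parallel \emph{within} $N$ because $\omega_{a\alpha}=0$), together with the observation that conjugation by the flow of $E_1$ carries $V_\mu$ to $V_\mu$ and scales the induced metric by $h_\mu(s)^2/h_\mu(s_0)^2$. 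Once the structure is in hand, the integrability conditions \eqref{3.11}--\eqref{3.14} are a routine matter of matching the two expressions for each Ricci eigenvalue, exactly parallel to Theorem~3.5 (or its analogue) in \cite{L}, so I would compress that part and refer to \cite{L} for the multiply-warped-product curvature identities.
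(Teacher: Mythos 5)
Your proposal is correct and takes essentially the same route as the paper: the paper defers the multiply-warped-product splitting and the Einstein property of the fibers to the analogous Theorem 3.4 of \cite{L}, and derives \eqref{3.11}--\eqref{3.13} from \eqref{harcon} (with $i=k=1$, $j=a$) combined with \eqref{3.8}, \eqref{3.5} and \eqref{3.3}, while \eqref{3.14} comes from the warped-product Ricci formula exactly as you describe. Your alternative derivation of \eqref{3.11} by differentiating \eqref{3.6} and using \eqref{3.3} and \eqref{3.5} is a valid, equally routine variant (your first suggested route via $R_{aa}=\sum_k R_{akak}$ is not the relevant identity, but you immediately supply the correct one); note only that both your computation and \eqref{3.13}--\eqref{3.14} implicitly use the corrected forms of \eqref{3.3} and \eqref{3.6}, in which the terms $\frac{R}{n-1}$ should read $\frac{fR}{n-1}$.
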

\begin{proof}
	In a similar way to that of Theorem 3.4 in \cite{L}, we can easily prove this lemma.
     Here, we only prove the partially integrability conditions, different from that of Ricci solitons.
	
	Making use of \eqref{harcon} again, we see
	\[ 
	R_{1a1a}=-R_{aa}+\frac{R}{n-1}
	\]
	which shows that \eqref{3.11} holds with \eqref{3.8}.
	Then in combination with \eqref{3.3}, we immediately get \eqref{3.13}. Putting $R_{aa,1}=\lambda'_1$ into \eqref{3.5} implies harmonic condition \eqref{3.12}.
	The rest proof can be referred to the paper \cite{L}.
\end{proof}

\section{The local structure of the case other than two distinct Ricci-eigenfunctions}\label{vss4}
In this section and the next section, 
we will prove the local classification of $n$-dimensional vacuum static spaces with harmonic curvature according to how many distinct Ricci-eigenvalues and their multiplicities. 
To avoid repetition, unless stated otherwise, the Ricci-eigenvalues mentioned in the following discussions do not include $\lambda_1$, which is the eigenvalue with respect to the gradient vector $\nabla f$ of the lapse function. Also, we denote the other Ricci-eigenvalues by $\lambda_a$, $a=2,\cdots, n$.  

\medskip

Firstly, we treat the case that all Ricci-eigenfunctions $\{\lambda_a\}$, $2\le a\le  n$, are equal.  Type {\rm (i)} of Theorem \ref{complete} will come from this case.
\begin{theorem} \label{same}
	Suppose that $(M^n, g,f)$ is an $n$-dimensional vacuum static space with harmonic curvature satisfying \eqref{static}. If all Ricci-eigenfunctions $\{\lambda_a\}$, $2\le a\le  n$, are equal.	Then locally the metric $g$ is a warped product of the form
	\[
	g= ds^2 +  h^2(s) \tilde{g}
	\]
	with a certain positive warping function $h$,
	where the Riemannian metric $\tilde{g}$ is Einstein. 
	In particular, the $D$-tensor of  $(M^n, g,f)$ vanishes (and so does the Bach tensor). 	
\end{theorem}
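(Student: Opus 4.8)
The plan is to read off the warped-product form from Theorem~\ref{mulwar} in the degenerate case, then verify directly that the $D$-tensor vanishes, after which Bach-flatness is immediate from \eqref{DB}. Since $f$ is non-constant, the set $M_{\rm Ric}\cap\{\nabla f\neq 0\}$ is open and dense in $M$ (real analyticity), so fix a point $p$ there. The hypothesis that $\lambda_2=\cdots=\lambda_n$ says precisely that, in the notation of Theorem~\ref{mulwar}, there is only one distinct Ricci-eigenvalue besides $\lambda_1$, i.e. $m=1$ and $r_1=n-1$. As $n\ge 4$ forces $r_1\ge 3>1$, no $1$-dimensional warped factor $L_\nu$ can occur (so $l=0$), and the multiply warped product of Theorem~\ref{mulwar} reduces on a neighborhood $U$ of $p$ to a single warped product
\[
g = ds^2 + h_1^2(s)\,\tilde g_1 ,
\]
with $(N_1,\tilde g_1)$ an $(n-1)$-dimensional Einstein manifold (Einstein constant $(n-2)k_1$). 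Writing $h=h_1$ and $\tilde g=\tilde g_1$ gives the asserted local form near a dense set of points of $M$.

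For the $D$-tensor I would work in the adapted frame of Lemma~\ref{lemma3.1}, so that $R_{ij}=\lambda_i\delta_{ij}$, $f_1=|\nabla f|\neq 0$, and $f_a=0$ for $a\ge 2$. The tensor $D_{ijk}$ is antisymmetric in $j,k$; and since $\nabla f$ points along $E_1$ while $R_{1a}=0$, inspection of \eqref{D} shows that the only components not forced to vanish are $D_{ia1}$ with $a\ge 2$. A short substitution of \eqref{3.2} into \eqref{D}, using $R=\lambda_1+\sum_{b\ge 2}\lambda_b$, produces
\[
D_{ia1}=\frac{f_1}{n-2}\bigl(R-\lambda_1-(n-1)\lambda_a\bigr)\,\delta_{ia}.
\]
The hypothesis $\lambda_2=\cdots=\lambda_n=:\lambda$ now gives $R-\lambda_1=(n-1)\lambda$ and $\lambda_a=\lambda$, so the bracket is zero; hence $D\equiv 0$ on $M_{\rm Ric}\cap\{\nabla f\neq 0\}$, and by continuity of $D$ it vanishes on all of $M$.

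Finally, a manifold with harmonic curvature has Codazzi Ricci tensor (as recalled in the proof of Lemma~\ref{lemma3.1}) and, contracting that identity against \eqref{2.13}, constant scalar curvature; hence its Cotton tensor $C_{ijk}=A_{ij,k}-A_{ik,j}=R_{ij,k}-R_{ik,j}$ vanishes. Then \eqref{DB} collapses to $(n-2)fB_{ij}=D_{ijk,k}=0$, and since $f\neq 0$ on a dense set, $B\equiv 0$. (Equivalently, once the warped-product-with-Einstein-fiber structure is in hand, both $D$-flatness and Bach-flatness also follow from Qing-Yuan's Theorem~\ref{d-flat}.)

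I do not expect a genuine obstacle here: this is the degenerate case $m=1$ of the classification. The only step requiring a little care is the index bookkeeping in Theorem~\ref{mulwar}, namely ruling out $1$-dimensional warped factors and confirming that the remaining fiber is Einstein, which is immediate once $m=1$. The small piece of real content is the identity for $D_{ia1}$ displayed above, after which equality of the eigenvalues finishes the argument.
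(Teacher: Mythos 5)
Your argument is correct, and its skeleton is the same as the paper's: the warped-product form is read off from Theorem \ref{mulwar} in the degenerate case $m=1$, $r_1=n-1$, and Bach-flatness then follows from \eqref{DB} once $C\equiv 0$ and $D\equiv 0$ are in hand. The one genuine difference is how $D\equiv 0$ is obtained. The paper deduces it from the identity \eqref{DCW}: with $C_{ijk}=0$ this reads $D_{ijk}=-f_lW_{lijk}$, so one still has to verify $W_{1ijk}=0$ for a warped product with Einstein fibre (a Weyl-tensor computation the paper defers to \cite{L}). You instead substitute the diagonal Ricci tensor and $f_a=0$ directly into the definition \eqref{D} and observe that the only surviving components are $D_{aa1}=-D_{a1a}=\frac{f_1}{n-2}\bigl(R-\lambda_1-(n-1)\lambda_a\bigr)$, which vanish identically once $\lambda_2=\cdots=\lambda_n$ (so that $R-\lambda_1=(n-1)\lambda_a$); I checked this component formula and it is right. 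Your route is self-contained, needs no curvature computation for the warped product, and even recovers $f_lW_{lijk}=0$ as a by-product via \eqref{DCW}; what it costs is only the short frame computation. The remaining steps (constant scalar curvature and $C=0$ from harmonic curvature via \eqref{2.12}--\eqref{2.13}, then $(n-2)fB_{ij}=D_{ijk,k}=0$ and density of $\{f\neq 0\}$) match the paper.
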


\begin{proof}
	The first part of this theorem has been obtained by Theorem \ref{mulwar}.		
	For the second part, since the harmonicity implies that the Cotton tensor $C_{ijk}=0$,
	with the relationships \eqref{DCW} and \eqref{DB},
	it is easy to check that the vacuum static space is $D$-flat and Bach-flat of constant scalar curvature.
	The detail also can be referred to \cite{L}.
\end{proof}

\begin{remark}
	  In combination with Theorem \ref{d-flat}, it follows that for a vacuum static space $(M^n, g,f)$ satisfying \eqref{static}, $(M^n, g)$ is a locally warped product, described by the hypothesis of Theorem \ref{same}, if and only if the $D$-tensor vanishes. 
	In addition, from the proof of Theorem \ref{same}, we have that $D=0$ implies $B=0$, but the reverse is not true. 
\end{remark}

Next, we shall study the case
when $\lambda_2,~\lambda_3,~\cdots,~\lambda_n$ are at least three mutually different. As we shall see below, 
it turns out that this case cannot occur.

\medskip	
First of all, we analyze the integrability conditions in Theorem \ref{mulwar}.
Assume that $\lambda_a$ and $\lambda_\alpha$ are mutually different of multiplicities $r_1$ and $r_2$, i.e.,
\[
\lambda_{a}:=\lambda_{2}= \cdots= \lambda_{r_1+1} \quad {\rm and} \quad
\lambda_{\alpha}:=\lambda_{r_1+2}= \cdots=\lambda_{r_1+r_2+1};
\]
Here we make the following conventions on the range of indices:
\[
2 \leq a, b,  \cdots\leq ({r_1+1}); \quad ({r_1+2}) \leq \alpha, \beta,  \cdots\leq ({r_1+r_2+1});\quad 
\] 
Denote $\xi_a:=X$ and $ \xi_\alpha:=Y$, from Section \ref{vss3}, and then they satisfy the following integrability conditions:
\begin{equation}\label{4.1}
X'+X^2-\lambda_a=Y'+Y^2-\lambda_\alpha=\xi'_i+\xi^2_i-\lambda_i=-\frac{R}{n-1},
\end{equation}
\begin{equation}\label{4.2}
\lambda_{1}=f^{-1}\left( f''+\frac{R}{n-1}f\right)=-\sum_{i=2}^n\left(\xi'_i+\xi^2_i\right),
\end{equation}
\begin{equation}\label{4.3}
\lambda_{a}=f^{-1}\left( f'X+\frac{R}{n-1}f\right)
=-( X'+X^2)-X\sum_{i=2}^n\xi_i+(r_1-1)\frac{k_1}{h^2_1}+X^2,
\end{equation}
\begin{equation}\label{4.4}
\lambda_{\alpha}=f^{-1}\left( f'Y+\frac{R}{n-1}f\right)
=-( Y'+Y^2)-Y\sum_{i=2}^n\xi_i+(r_2-1)\frac{k_2}{h^2_2}+Y^2
\end{equation}
and 
\begin{equation}\label{4.5}
\lambda'_a-\left(\lambda_1-\lambda_a \right)X=\lambda'_\alpha-\left(\lambda_1-\lambda_\alpha \right)Y=0.
\end{equation}

\medskip
By using the above basic facts, it is easy to get the following equations:
\begin{lemma}\label{lemma4.1}	
	Let $(M^n, g, f)$, $(n\geq 4)$, be an $n$-dimensional vacuum static space with harmonic curvature.
	In some neighborhood $U$ of $p\in M_{\rm Ric} \cap \{ \nabla f \neq 0  \}$, 
	let $\lambda_a$ and $\lambda_\alpha$ are mutually different Ricci-eigenvalues with multiplicities $r_1$ and $r_2$.
	Then the following identities hold:

\begin{equation}\label{4.6}
\left\{2f'+f\left[ \sum_{i=2}^n\xi_i-(X+Y) \right] \right\} (X-Y)=f\left[ (r_1-1)\frac{k_1}{h^2_1}-(r_2-1)\frac{k_2}{h^2_2} \right]
\end{equation}
	and
	\begin{equation}\label{4.7}
	-f^{-1}f'\sum_{i=2}^n\xi_i+\sum_{i=2}^n \xi'_i+2\sum_{i=2}^n \xi^2_i=(X+Y)\sum_{i=2}^n\xi_i.
	\end{equation}
\end{lemma}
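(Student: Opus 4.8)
The plan is to extract the two identities \eqref{4.6} and \eqref{4.7} purely algebraically from the integrability conditions \eqref{4.1}--\eqref{4.5}, treating $X$, $Y$ and the collective sum $\Sigma := \sum_{i=2}^n \xi_i$ as the unknowns and eliminating the Ricci-eigenvalues $\lambda_1,\lambda_a,\lambda_\alpha$. First I would record from \eqref{4.1} the relations $\lambda_a = X'+X^2+\frac{R}{n-1}$ and $\lambda_\alpha = Y'+Y^2+\frac{R}{n-1}$, so that $\lambda_a-\lambda_\alpha = (X'+X^2)-(Y'+Y^2)$; these let me rewrite all the $\lambda$'s appearing in \eqref{4.3} and \eqref{4.4} in terms of $\xi$'s. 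The key first move is to subtract \eqref{4.3} from \eqref{4.4}: on the left one gets $f^{-1}f'(X-Y)$, while on the right the $-(X'+X^2)+X^2$ and $-(Y'+Y^2)+Y^2$ terms, combined with $-\Sigma(X-Y)$ and the curvature terms $(r_1-1)k_1/h_1^2 - (r_2-1)k_2/h_2^2$, together with $\lambda_a-\lambda_\alpha$ computed from \eqref{4.1}, should collapse; pairing this with the first equalities of \eqref{4.3} and \eqref{4.4} (namely $f\lambda_a = f'X + \frac{R}{n-1}f$ and similarly for $\lambda_\alpha$), subtract again to introduce the $2f'$ term and produce \eqref{4.6} after multiplying through by $f$.

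For \eqref{4.7} I would sum the second equality of \eqref{3.14} (equivalently the pattern in \eqref{4.3}--\eqref{4.4}) over all indices $i=2,\dots,n$. Concretely, each $\lambda_i$ equals $f^{-1}(f'\xi_i + \frac{R}{n-1}f)$, so $\sum_{i=2}^n \lambda_i = f^{-1}f'\Sigma + (n-1)\frac{R}{n-1}$; on the other hand \eqref{4.1} gives $\sum_{i=2}^n \lambda_i = \sum \xi_i' + \sum \xi_i^2 + (n-1)\frac{R}{n-1}$. These two expressions do not yet match the claimed identity, so the remaining ingredient must come from \eqref{4.2}/\eqref{4.13}: using $\lambda_1 = -\sum(\xi_i'+\xi_i^2)$ together with summing the full right-hand side of \eqref{3.14} (which contains the extra $-\xi_i\Sigma$ and curvature terms) and comparing with the $f^{-1}(f'\xi_i+\frac{R}{n-1}f)$ form, the curvature terms should cancel against the $\lambda_1$-relation, leaving $-f^{-1}f'\Sigma + \sum\xi_i' + 2\sum\xi_i^2 = \Sigma^2$ in the generic case, and specializing the book-keeping of which $\xi_i$ equal $X$ versus $Y$ replaces $\Sigma^2$ by $(X+Y)\Sigma$ — wait, that substitution only works when $m=2$, which is exactly the standing assumption of this section, so it is legitimate here.

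The main obstacle is purely organizational rather than conceptual: the curvature-type terms $(r_1-1)k_1/h_1^2$ and $(r_2-1)k_2/h_2^2$ appear with different coefficients in the various equations, and one must track carefully that the multiplicities $r_1,r_2$ enter the sums $\sum_{i=2}^n$ with the right weights (there are $r_1$ indices with $\xi_i=X$ and $r_2$ with $\xi_i=Y$, and $r_1+r_2+1=n$ only in the two-eigenvalue subcase), so that these terms survive in \eqref{4.6} but cancel in \eqref{4.7}. I expect the cleanest route is to first derive \eqref{4.7} in the form with $\Sigma^2$ replaced by $(X+Y)\Sigma$ using the two-eigenvalue structure, then feed it back when simplifying the difference that yields \eqref{4.6}. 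Since this is exactly the computation carried out in the analogous Lemma for gradient Ricci solitons in \cite{L}, I would also note that the argument there transfers with only the obvious changes coming from \eqref{static} replacing the soliton equation.
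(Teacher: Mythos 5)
Your derivation of \eqref{4.6} is essentially the paper's: subtract \eqref{4.3} from \eqref{4.4} to get $\lambda_a-\lambda_\alpha=f^{-1}f'(X-Y)$ on one side and the expression with $-(X-Y)\sum_i\xi_i$ plus the curvature terms on the other, then eliminate $\lambda_a-\lambda_\alpha$ via the first equality of \eqref{4.1}. That part is fine.

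Your route to \eqref{4.7}, however, has a genuine gap. Summing the relations $\lambda_i=f^{-1}f'\xi_i+\tfrac{R}{n-1}=-\xi_i'-\xi_i\sum_j\xi_j+(r-1)k/h^2$ over $i=2,\dots,n$ does not make the curvature terms disappear: the sum produces $\sum_\mu r_\mu(r_\mu-1)k_\mu/h_\mu^2$, and the relation $\lambda_1=-\sum_i(\xi_i'+\xi_i^2)$ contains no $k_\mu/h_\mu^2$ terms to cancel against, so what you obtain is a different identity in which these terms survive. Moreover, the first equalities alone only yield $f^{-1}f'\sum\xi_i=\sum\xi_i'+\sum\xi_i^2$, from which \eqref{4.7} is equivalent to $\sum_i\xi_i^2=(X+Y)\sum_i\xi_i$ --- a nontrivial constraint that cannot be a pointwise algebraic consequence of \eqref{4.1}--\eqref{4.4}. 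The missing idea is a differentiation step: the paper differentiates the identity underlying \eqref{4.6} (using $(k/h^2)'=-2(k/h^2)\,h'/h$, which is what kills the curvature terms), compares the result with $\lambda_a'-\lambda_\alpha'$ computed from the harmonicity ODE \eqref{4.5} together with \eqref{4.1}--\eqref{4.2}, and then divides by $X-Y\neq0$ after substituting $(X-Y)'+2(X^2-Y^2)=(X-Y)(f^{-1}f'+X+Y)$. Your proposal never differentiates, so it cannot reach \eqref{4.7}.

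Two further objections to your fallback. First, the substitution of $\Sigma^2$ by $(X+Y)\Sigma$ is algebraically invalid even when there are only two distinct eigenvalues, since $\Sigma=\sum_{i=2}^n\xi_i=r_1X+r_2Y\neq X+Y$ once a multiplicity exceeds one. Second, you may not assume $m=2$: Lemma \ref{lemma4.1} sits in Section \ref{vss4} and is applied in Theorem \ref{thm4.2} precisely to pairs drawn from three or more distinct eigenvalues, so the identity \eqref{4.7} must be proved with $\sum_{i=2}^n\xi_i$ running over all eigendirections, not just those belonging to $X$ and $Y$.
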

\begin{proof}
	Subtracting \eqref{4.3} from \eqref{4.4} gives us 
		\begin{equation}\label{4.8}
	\lambda_{a}-\lambda_\alpha=f^{-1} f'(X-Y)
	\end{equation}
    and 
	\[
	\lambda_{a}-\lambda_\alpha=-(X-Y)'-(X-Y)\sum_{i=2}^n\xi_i+\left[(r_1-1)\frac{k_1}{h^2_1}
	- (r_2-1)\frac{k_2}{h^2_2}\right].
	\]
	From the first equality of \eqref{4.1},
	 it follows that $\lambda_{a}-\lambda_\alpha=(X-Y)'+(X^2-Y^2)$.
	 Then respectively comparing with \eqref{4.8} and the above equation, we have
	 \begin{equation}\label{4.9}
	 X'-Y'=(X-Y)[f^{-1}f'-(X+Y)]
	 \end{equation}
  and 
	\[
		2(\lambda_{a}-\lambda_\alpha)=-( X-Y)\left[ \sum_{i=2}^n\xi_i-(X+Y) \right] +\left[(r_1-1)\frac{k_1}{h^2_1}
	- (r_2-1)\frac{k_2}{h^2_2}\right].
	\]
  Putting \eqref{4.8} into the above equation, we can immediately see \eqref{4.6}.	
   Then differentiating the above equation yields	
	\begin{equation*}
	\begin{aligned} 
	2(\lambda_{a}-\lambda_\alpha)'
	=&-( X-Y)'\sum_{i=2}^n\xi_i-( X-Y)\sum_{i=2}^n\xi'_i\\
	&-2\left[\left( (r_1-1)\frac{k_1}{h^2_1}-X'\right)X
	- \left( (r_2-1)\frac{k_2}{h^2_2}-Y'\right)Y\right],
	\end{aligned}
	\end{equation*}
	 where $X=h'_1/h_1$ and $Y=h'_2/h_2$ were used.
	On the other hand, it follows from  \eqref{4.5} that
	\begin{equation*}
	\begin{aligned}
	& \lambda'_a -\lambda'_{\alpha}=\lambda_{1}(X-Y)-\lambda_{a}X+\lambda_{\alpha}Y \\
	=&-(X-Y)\sum_{i=2}^n\left(\xi'_i+\xi^2_i\right)+(X^2-Y^2)\sum_{i=2}^n\xi_i\\
	&-\left[\left( (r_1-1)\frac{k_1}{h^2_1}-X'\right)X
	- \left( (r_2-1)\frac{k_2}{h^2_2}-Y'\right)Y\right],
	\end{aligned}
	\end{equation*}
	where \eqref{4.1} and \eqref{4.2} were used.
	Comparing with the above two equations leads to
	\[
		-[(X-Y)'+2(X^2-Y^2)]\sum_{i=2}^n\xi_i+(X-Y)\sum_{i=2}^n \xi'_i+2(X-Y)\sum_{i=2}^n\xi^2_i=0.
	\]	
 Putting $(X-Y)'+2(X^2-Y^2)=(X-Y)(f^{-1}f'+X+Y)$, rewritten by \eqref{4.9}, into the above equation, 
   we see \eqref{4.7} since $X$ and $Y$ are different.
 We have completed the proof of this lemma.
\end{proof}	

Next, we will apply equations \eqref{4.1}, \eqref{4.2} and \eqref{4.7} to deal with the case when $\lambda_2,~\lambda_3,~\cdots,~\lambda_n$ are at least three mutually different values.		
\begin{theorem}\label{thm4.2}
	Let $(M^n, g, f)$, $(n\geq 4)$, be an $n$-dimensional vacuum static space with harmonic curvature.
	Then in some neighborhood $U$ of $p \in M_{\rm Ric} \cap \{ \nabla f \neq 0  \}$, 
	the Ricci-eigenvalues $\lambda_2,~\lambda_3,~\cdots,~\lambda_n$ cannot be more than two distinct values. 
\end{theorem}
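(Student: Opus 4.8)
The plan is to argue by contradiction: suppose that in $U$ at least three distinct values occur among the Ricci-eigenvalues $\lambda_2,\dots,\lambda_n$, and pick three of the functions $\xi_2,\dots,\xi_n$ realizing three mutually distinct eigenvalues, call them $X$, $Y$, $Z$. The first point to record is that $X,Y,Z$ are pairwise distinct everywhere on $U$: by \eqref{3.6} one has $\xi_i-\xi_j=\frac{f}{|\nabla f|}(\lambda_i-\lambda_j)$, and $f\neq 0$, $|\nabla f|\neq 0$ on $U$, so distinct $\lambda$'s force distinct $\xi$'s, all of them differentiable functions of $s$ by Lemma \ref{lemma3.3}.

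The key observation is that in the identity \eqref{4.7} of Lemma \ref{lemma4.1} the entire left-hand side, $-f^{-1}f'\sum_{i=2}^n\xi_i+\sum_{i=2}^n\xi_i'+2\sum_{i=2}^n\xi_i^2$, depends only on $f$ and on the full collection $\{\xi_i\}_{i=2}^n$, and not on which pair of distinct eigenvalues is chosen; only the factor $(X+Y)$ on the right-hand side does. Hence, writing $S:=\sum_{i=2}^n\xi_i$ and applying \eqref{4.7} successively to the pairs $(X,Y)$, $(Y,Z)$ and $(X,Z)$ — all legitimate since Lemma \ref{lemma4.1} only requires the two chosen eigenvalues to be distinct — the left-hand sides agree and we obtain $(X+Y)S=(Y+Z)S=(X+Z)S$. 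Subtracting the first relation from the second gives $(X-Z)S=0$; since $X\neq Z$ on $U$ this forces $S\equiv 0$, whence also $\sum_{i=2}^n\xi_i'=S'\equiv 0$. Feeding $S\equiv 0$ back into \eqref{4.7} then yields $2\sum_{i=2}^n\xi_i^2=0$, so $\xi_i\equiv 0$ for every $i=2,\dots,n$.

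To close the argument I would substitute $\xi_i\equiv 0$ into the integrability condition \eqref{4.1}: from $\xi_i'+\xi_i^2-\lambda_i=-\frac{R}{n-1}$ we get $\lambda_i=\frac{R}{n-1}$ for all $i=2,\dots,n$, so \emph{all} of $\lambda_2,\dots,\lambda_n$ coincide, contradicting the standing assumption that at least two of them are distinct. (As a byproduct \eqref{4.2} then gives $\lambda_1=0$, incompatible with $\nabla f\neq 0$ unless $R=0$, but the contradiction is already in hand.) Hence at most two distinct values can occur. Since all the analytic content is already packaged into Lemma \ref{lemma4.1}, the remaining steps are essentially algebraic; the only part requiring a little care is the bookkeeping with the eigenvalue multiplicities — verifying that the sums over $i=2,\dots,n$ appearing in \eqref{4.7} are literally the same for all three pairs — and I do not anticipate a serious obstacle there.
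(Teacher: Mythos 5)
Your proposal is correct and follows essentially the same route as the paper: both apply \eqref{4.7} to the three pairs of distinct eigenvalues, conclude $\sum_{i}\xi_i=0$ (since otherwise $X=Y=Z$), then deduce $\sum_i\xi_i^2=0$ and hence $\xi_i\equiv 0$, which contradicts the assumed distinctness of the eigenvalues. Your added details (that distinct $\lambda$'s force distinct $\xi$'s via \eqref{3.6}, and the explicit derivation of the final contradiction from \eqref{4.1}) merely make explicit what the paper leaves implicit.
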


\begin{proof}
	If not, we assume that $\lambda_2,~\lambda_3,~\cdots,~\lambda_n$ are at least three mutually different values,
	and denote by $\lambda_a$, $\lambda_\alpha$ and $\lambda_p$ with multiplicities $r_1$, $r_2$ and $r_3$.
	For convenience, we also denote $\xi_a:=X$, $\xi_\alpha:=Y$ and  $\xi_{p}:=Z$. By \eqref{4.7}, 
	with the assumption of Lemma \ref{lemma4.1}, we see that
	\[
	(X+Y)\sum_{i=2}^n\xi_i=(X+Z)\sum_{i=2}^n\xi_i=(Y+Z)\sum_{i=2}^n\xi_i.
	\]
	If $\sum_{i=2}^n\xi_i\neq0$, the above equation implies that $X=Y=Z$ and this contradicts the hypothesis.
	If $\sum_{i=2}^n\xi_i=0$, \eqref{4.7} implies $\sum_{i=2}^n \xi^2_i=0$. Hence $\xi_i=0$ for each $i=2,\,\dots,\,n$, which is a contradiction.
	We have completed the proof of Theorem \ref{thm4.2}.
\end{proof}

\section{The local structure of the case with exactly two distinct Ricci-eigenfunctions}\label{vss5}

In this section we begin to study the case when there are exactly two distinct Ricci values in the eigenvalues 
$\lambda_2, \cdots, \lambda_n$.	
Types {\rm (ii)}, {\rm (iii)} and {\rm (iv)} of Theorem \ref{complete} come from this section.
	
\smallskip
First of all, we need the following lemma to prepare for the local structure of the case.

\begin{lemma}\label{lemma5.1} 
	Let $(M^n, g, f)$, $(n\geq 4)$, be an $n$-dimensional vacuum static space with harmonic curvature.
	Assume in some neighborhood $U$ of $p \in M_{\rm Ric} \cap \{ \nabla f \neq 0 \}$, there are exactly two distinct values in the Ricci-eigenvalues $\lambda_2, \cdots, \lambda_n$, 
	denoted by $\lambda_a$ and $\lambda_\alpha$ of multiplicities $r_1$ and $r_2:=n-r_1-1$.
	Then one of functions $X$ and $Y$ vanishes. 
\end{lemma}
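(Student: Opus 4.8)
The plan is to reduce the statement to the single scalar identity $XY\equiv 0$ by an elementary manipulation of the integrability conditions of Theorem~\ref{mulwar} together with \eqref{4.7} from Lemma~\ref{lemma4.1}, and then to upgrade ``$XY\equiv 0$'' to ``$X\equiv 0$ or $Y\equiv 0$'' via real-analyticity. Throughout I write $X=\xi_a$, $Y=\xi_\alpha$ and $S=\sum_{i=2}^n\xi_i=r_1X+r_2Y$, using that $r_1$ of the indices carry the value $X$, $r_2$ carry $Y$, and $r_1+r_2=n-1$; all of these quantities depend only on $s$, and since $f$ vanishes at most on a discrete set of $s$-values it suffices to argue on $\{f\neq 0\}$.

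First I would extract two first-order relations by comparing the two expressions for each eigenvalue. From \eqref{4.1} one has $\lambda_a=X'+X^2+\frac{R}{n-1}$, while \eqref{4.3} gives $\lambda_a=f^{-1}(f'X+\frac{R}{n-1}f)=f^{-1}f'X+\frac{R}{n-1}$; equating these yields $fX'=f'X-fX^2$, and symmetrically $fY'=f'Y-fY^2$. Taking the weighted combination that produces $S$ gives
\[
fS'-f'S=r_1(f'X-fX^2)+r_2(f'Y-fY^2)-f'S=-f\big(r_1X^2+r_2Y^2\big).
\]
Next I would rewrite \eqref{4.7}: multiplying it by $f$, substituting $S=r_1X+r_2Y$, and using $(X+Y)(r_1X+r_2Y)=r_1X^2+r_2Y^2+(r_1+r_2)XY$, it becomes
\[
fS'-f'S=f\big[(r_1+r_2)XY-r_1X^2-r_2Y^2\big].
\]
Comparing the two displays, the terms $r_1X^2+r_2Y^2$ cancel and we are left with $f(r_1+r_2)XY=0$. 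Since $f\neq 0$ where we work and $r_1+r_2=n-1>0$, this forces $XY=0$, hence $XY\equiv 0$ on $U$ by continuity.

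Finally, to conclude that one of $X,Y$ vanishes identically rather than merely that their product does, I would invoke that a metric with harmonic curvature is real-analytic in harmonic coordinates, so $X$ and $Y$ are real-analytic functions of $s$: if $X\not\equiv 0$ its zeros are isolated, so $\{X\neq 0\}$ is dense and $Y$ vanishes there, whence $Y\equiv 0$ by continuity; symmetrically if $Y\not\equiv 0$. (Equivalently, where $f\neq 0$ the function $X$ solves the Bernoulli equation $X'=\frac{f'}{f}X-X^2$, whose only solution vanishing at a point is $X\equiv 0$, so each of $X,Y$ either vanishes nowhere or vanishes identically, and $XY\equiv 0$ excludes the former for both.)

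The calculation itself is short, so I do not expect a real obstacle beyond the decisive cancellation in the penultimate step; the only things to be careful about are the multiplicity bookkeeping in $S=r_1X+r_2Y$ (so that $\sum_{i=2}^n\xi_i^2=r_1X^2+r_2Y^2$ etc.) and restricting to $\{f\neq 0\}$ before dividing by $f$.
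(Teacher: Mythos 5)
Your proposal is correct and follows essentially the same route as the paper: both combine the identity \eqref{4.7} (or its precursor in the proof of Lemma \ref{lemma4.1}) with the relation $f\xi_i'=f'\xi_i-f\xi_i^2$ coming from \eqref{4.1} and \eqref{4.3} to cancel the quadratic terms and conclude $XY=0$. The one genuine addition on your side is the justification (via analyticity, or the Bernoulli/ODE-uniqueness argument) that $XY\equiv 0$ forces one of $X,Y$ to vanish identically, a step the paper asserts without comment.
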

\begin{proof}
In this case, equation \eqref{4.6} can be rewritten as 
\begin{equation*}
\begin{aligned}
&-(X'-Y')(r_1X+r_2Y)+(X-Y)(r_1X'+r_2Y')\\
&+2(r_1X^2+r_2Y^2)(X-Y)-2(X^2-Y^2)(r_1X+r_2Y)=0.
\end{aligned}
\end{equation*}	
Directly simplifying the above equation yields that
\[
X'Y-XY'+2XY(X-Y)=0.
\]
Meanwhile, the integrability conditions \eqref{3.11} and \eqref{3.14} imply that $f\xi'_i=f'\xi_i-f\xi^2_i$ for each $i=2,~\dots,~,n$. Then putting $fX'=f'X-fX^2$ and $fY'=f'Y-fY^2$ into the above equation, we see $XY=0$,
which means that one of functions $X$ and $Y$ vanishes. 
We have completed the proof of this lemma.
\end{proof}
		
From now on, without loss of generality, we assume that $X\neq0$ and $Y=0$. Meanwhile, $Y=0$ implies that $h_2$ is a constant.

Next, we will discuss two cases according to 
whether one of the two Ricci-eigenfunctions is of single multiplicity.

\subsection{One of the multiplicities of two Ricci-eigenfunctions $\lambda_a$ and $\lambda_\alpha$ is single}
In this subsection, we will study the case 
that one of the multiplicities of two Ricci-eigenfunctions $\lambda_a$ and $\lambda_\alpha$ is single 
in some neighborhood $U$ of $p$ in $M_{\rm Ric} \cap \{ \nabla f \neq 0  \}$.
Types {\rm (ii)}, {\rm (iii)} and {\rm (iv)} for $r=n-1$ of Theorem \ref{complete} come from this subsection.

\medskip
\noindent{\bf Subcase I.}   \quad  $r_1=1$ and $r_2=n-2\geq2$ 
	
For this subcase, the following classification results form types {\rm (ii)} and {\rm (iii)} of Theorem \ref{complete}.
	
 We denote $h_1=h$ and then locally the metric is given by $g = ds^2 + h^2(s) dt^2 + {g_2}$.
Firstly, we claim $R\neq0$. If not, then $R=0$. 
\eqref{4.1} means $\lambda_a=X'+X^2 $, while \eqref{4.3} means $\lambda_a=-(X'+X^2) $. Hence we see
$\lambda_a=X'+X^2=0$ and $\lambda_\alpha=0$ by \eqref{4.4}. 
This is a contradiction.
	
Next, making use of \eqref{4.1} and \eqref{4.3} again, 
we see $X'+X^2 =-\frac{R}{2(n-1)}\neq0$ and 
$ \frac{h''}{h} =-\frac{R}{2(n-1)}\neq0$.
On the other hand, \eqref{4.4} gives us $(r_2-1)\frac{k_2}{h^2_2}=\frac{R}{n-1}$,
which implies that ${g_2}=h^2_2\tilde{g_2}$ 
is an Einstein metric with the Einstein constant $\frac{R}{n-1}$.

If $R> 0$, setting $r_0 =\sqrt{\frac{R}{2(n-1)}} $,   
then, $h = C_1 \sin( r_0s ) $ for some constant $C_1\neq 0$ and $X=  r_0 \cot( r_0s ) $.
From $f'X=f(X'+X^2)$, it follows that $f= c_1 \cos( r_0s )$ for some nonzero constant $c_1$.
Thus $g= ds^2 + \sin^2( r_0s )dt^2+{g_2}$ by absorbing a constant into $dt^2$ and using 
$\lambda_\alpha=\frac{k_2}{h^2_2} = \frac{R}{n-1}$.
Here $s$ is the distance on $\mathbb{S}^2\left(\frac{R}{2(n-1)}\right)$ from a point.

If $R < 0$, we set $r_0 =\sqrt{-\frac{R}{2(n-1)}}$. 
One can argue similarly as above, and get  $g= ds^2 +\sinh^2( r_0s )dt^2 +{g}_2$  
 and $f= c_2 \cosh( r_0s  )$ for some nonzero constant $c_2$.

\medskip
Consequently, we have a conclusion as follows.
\begin{theorem} \label{twoone1}
Let $(M^n, g, f)$, $(n\geq 4)$, be an $n$-dimensional vacuum static space with harmonic curvature.
Aussme locally the metric is given by $g = ds^2 + h^2(s) dt^2 + {g_2}$,
and ${g_2}$ is an Einstein metric.
Then $g$ is of nonzero scalar curvature $R$;

\smallskip
{\rm (i)} when $R>0$, $(M^n, g)$ is locally isometric  to the Riemannian product $\mathbb{S}^2\left(\frac{R}{2(n-1)}\right) \times N^{n-2}_2 $ and $ f=c_1\cos\left( \sqrt{\frac{R}{2(n-1)}}s \right)$ for some nonzero constant $c_1$, where $s$ is the distance on $\mathbb{S}^2\left(\frac{R}{2(n-1)}\right)$ from a point and the Einstein manifold $\left(N^{n-2}_2, {g_2}\right)$ is of positive Einstein constant $\frac{R}{n-1}$.
	
\smallskip
{\rm (ii)} when $R<0$, $(M^n, g)$ is locally isometric to the Riemannian product  
$\mathbb{H}^2\left(\frac{R}{2(n-1)}\right) \times   N^{n-2}_2 $, 
where the Einstein manifold $\left(N^{n-2}_2, {g_2}\right)$ is of negative Einstein constant $\frac{R}{n-1}$,
and $f=c_2\cosh\left( \sqrt{-\frac{R}{2(n-1)}}s \right) $ for some nonzero constant $c_2$.
\end{theorem}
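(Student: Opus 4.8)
The plan is to read off Theorem~\ref{twoone1} from the integrability conditions of Theorem~\ref{mulwar}, specialized to the present situation. Under the hypotheses we are in Subcase~I of Section~\ref{vss5}: since $g_2$ is Einstein, the Ricci-eigenvalues among $\lambda_2,\dots,\lambda_n$ are the $t$-direction value $\lambda_a$ (multiplicity $r_1=1$) and the $g_2$-value $\lambda_\alpha$ (multiplicity $r_2=n-2\ge2$), and by Lemma~\ref{lemma5.1} we may take $X:=\xi_a=h'/h\ne0$, $Y:=\xi_\alpha=0$ (the alternative $X=Y=0$ would make all of $\lambda_2,\dots,\lambda_n$ equal). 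Two consequences will be used constantly: $\sum_{i=2}^n\xi_i=r_1X+r_2Y=X$, and $Y=0$ means the $g_2$-block is an unwarped Riemannian product factor. I would also recall that harmonic curvature forces $R$ to be a constant. The first step is to rule out $R=0$: if $R=0$ then \eqref{4.1} gives $\lambda_a=X'+X^2$, while the last expression in \eqref{4.3}, after inserting $r_1=1$ and $\sum_{i=2}^n\xi_i=X$, gives $\lambda_a=-(X'+X^2)$; hence $\lambda_a=0$, and then \eqref{4.1} with $Y=0$, $R=0$ forces $\lambda_\alpha=0$ too, contradicting $\lambda_a\ne\lambda_\alpha$.

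Assuming now $R\ne0$, I would combine the two expressions $\lambda_a=-(X'+X^2)$ and $\lambda_a-\frac{R}{n-1}=X'+X^2$ (the latter from \eqref{4.1}) to get
\begin{equation*}
X'+X^2=-\frac{R}{2(n-1)}\ne0,\qquad \lambda_a=\frac{R}{2(n-1)}.
\end{equation*}
Since $X=h'/h$, the first of these is exactly $h''/h=-\frac{R}{2(n-1)}$. Simultaneously, \eqref{4.4} with $Y=0$ combined with $\lambda_\alpha=\frac{R}{n-1}$ (again from \eqref{4.1}) gives $(r_2-1)k_2/h_2^2=\frac{R}{n-1}$, i.e. the product factor $(N_2^{n-2},g_2)$ is Einstein with Einstein constant $\frac{R}{n-1}$, positive if $R>0$ and negative if $R<0$.

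Next I would identify the two-dimensional block $ds^2+h^2\,dt^2$: its Gauss curvature is $-h''/h=\frac{R}{2(n-1)}$, a nonzero constant, so this factor is locally a $2$-dimensional space form, namely $\mathbb{S}^2\!\left(\frac{R}{2(n-1)}\right)$ if $R>0$ and $\mathbb{H}^2\!\left(\frac{R}{2(n-1)}\right)$ if $R<0$; writing it in geodesic polar coordinates about a point, so that $s$ is the distance from that point, gives $h=\frac{1}{r_0}\sin(r_0s)$, resp.\ $\frac{1}{r_0}\sinh(r_0s)$, with $r_0:=\sqrt{|R|/(2(n-1))}$. To pin down $f$, I would use the first equality in \eqref{4.3}, which reads $f'X=f\!\left(\lambda_a-\frac{R}{n-1}\right)=f(X'+X^2)$; dividing by $fX$ gives $(\log f)'=(X'+X^2)/X=h''/h'=(\log h')'$, so $f=c\,h'$ for a constant $c$, which is nonzero because $f$ is. Hence $f=c_1\cos(r_0s)$ when $R>0$ and $f=c_2\cosh(r_0s)$ when $R<0$. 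Taking the Riemannian product of the space-form factor with $(N_2^{n-2},g_2)$ gives exactly the two alternatives in the statement.

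I do not expect a genuine obstacle here; the argument is bookkeeping with \eqref{4.1}, \eqref{4.3}, \eqref{4.4}. The two points that need care are: (a) using $r_1=1$ correctly in \eqref{4.3}, so that both the curvature term $(r_1-1)k_1/h_1^2$ drops out and $\sum_{i=2}^n\xi_i$ collapses to $X$, since this is precisely what makes the two formulas for $\lambda_a$ combine into $X'+X^2=-\frac{R}{2(n-1)}$; and (b) recognizing the warped surface $ds^2+h^2\,dt^2$ as a piece of $\mathbb{S}^2$ or $\mathbb{H}^2$ via its (now constant) Gauss curvature, which is what converts the ODE for $h$ into the clean isometric classification together with the closed forms for $f$.
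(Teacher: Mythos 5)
Your proposal is correct and follows essentially the same route as the paper: rule out $R=0$ by combining \eqref{4.1} and \eqref{4.3}, deduce $X'+X^2=h''/h=-\frac{R}{2(n-1)}$ and $(r_2-1)k_2/h_2^2=\frac{R}{n-1}$, and solve for $h$ and $f$ via $f'X=f(X'+X^2)$. The only cosmetic difference is that you recognize the surface factor through its constant Gauss curvature while the paper integrates the ODE for $h$ directly; these are equivalent.
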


\medskip
\noindent{\bf Subcase II.}  \quad  $r_2=1$ and $r_1=n-2\geq2$.

For this subcase, we will obtain the classification result,
which forms type {\rm (iv)} of Theorem \ref{complete} for $r=n-2$ .
 
We denote $h_1=h$ and then locally the metric is $g = ds^2 + h^2(s) g_1 + dt^2$.
Firstly, from \eqref{4.4} we see $R=0$. Meanwhile,
\[
f^{'} \frac{h^{'}}{h}=f'X=f(X'+X^2)= f\frac{h^{''}}{h}
\]
implies $ch^{'} =f$ for a constant $c \neq 0$.
By \eqref{4.2}, we have
\[
	 ch^{'''}=f^{''}=-(n-2)f(X'+X^2)=-(n-2)ch'\frac{h^{''}}{h}
\]
and
$h^{(n-2)} h^{''} =c_0$
 for some constant $c_0\neq0$. In fact, if $c_0=0$, $h^{''}=0$
and $ X'+X^2=h''/h=0$. It follows from \eqref{4.3} that
$\lambda_a=X'+X^2=0$ and $\lambda_\alpha=0$. 
This is a contradiction.
	
By \eqref{4.1} and \eqref{4.3}, we see
\[
	0=2(X'+X^2)+(n-3)(X^2-\frac{k}{h_1^2})
	= 2\frac{h^{''}}{h}+(n-3)\frac{h'^2-k}{h^2},
\]
 which implies $
	 2hh''+(n-3)(h'^2-k)=0$.
Combining with $h^{(n-2)} h^{''} =c_0$, we obtain that
\[
	 h'^{2}+\frac{2c_0}{n-3}h^{-(n-3)}=k.
\]
Hence, in comparing with \eqref{h}, it is easy to see that $(W^{n-1}=\mathbb R\times N_1,\,\bar g=ds^2+h^2(s)g_1)$
is an $(n-1)$-dimensional vacuum static space with vanishing $D$ tensor, 
and the scalar curvature $\bar{R}=R-\lambda_{n}=0$.

\medskip
Consequently, we have the following conclusion in this subcase.
\begin{theorem}	\label{twoone2}
	Let $\left(M^n, g, f \right) $, $n\geq4$, be a vacuum static space with harmonic curvature satisfying \eqref{static}. Assume locally the metric is given by $g=ds^2+h^2(s)g_1+dt^2$, and ${g_1}$ is an Einstein metric.
Then $(M, g) $ is locally isometric to a domain in $( W^{n-1}   \times \mathbb{R}^1, g_W + dt^2  )$,
where $( W^{n-1}, g_W)$ is an $(n-1)$-dimensional $D$-flat vacuum static space of zero scalar curvature and $f = ch'$.
\end{theorem}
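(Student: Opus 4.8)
The plan is to organize the local computations carried out in the Subcase II discussion above into a clean proof, and then to recognize the warped factor as a genuine lower-dimensional $D$-flat vacuum static space. So suppose we are in the setting of Section~\ref{vss5} with exactly two distinct values $\lambda_a,\lambda_\alpha$ among $\lambda_2,\dots,\lambda_n$, of multiplicities $r_1=n-2$ and $r_2=1$. By Lemma~\ref{lemma5.1} one of $X=\xi_a$, $Y=\xi_\alpha$ vanishes, and the normalization $Y=0$ makes $h_2$ constant, so that locally $g=ds^2+h^2(s)g_1+dt^2$ with $h=h_1$ and $(N_1,g_1)$ Einstein of dimension $n-2$.

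First I would read off the three facts already established: from \eqref{4.4} with $Y=0$ and $r_2=1$ we get $R=0$; combining \eqref{4.1} and \eqref{4.3} (both with $R=0$) gives $f^{-1}f'X=X'+X^2=h''/h$, hence $(f/h')'=0$ and $f=ch'$ for some constant $c\neq 0$ (nonzero since $f\not\equiv 0$); and \eqref{4.2} with $R=0$, $f=ch'$ gives $h'''/h'=-(n-2)h''/h$, hence $h''=c_0h^{-(n-2)}$ for a constant $c_0$. The non-degeneracy $c_0\neq 0$ must be checked: if $c_0=0$ then $h''=0$, so $X'+X^2=0$, and \eqref{4.1}, \eqref{4.3} force $\lambda_a=\lambda_\alpha=0$, contradicting the assumption of two distinct eigenvalues.

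Next I would derive the first integral. Inserting $\sum_{i=2}^n\xi_i=(n-2)X$ and $r_1=n-2$ into \eqref{4.3} and comparing again with \eqref{4.1} yields $2hh''+(n-3)(h'^2-k)=0$; substituting $h''=c_0h^{-(n-2)}$ gives $h'^2+\tfrac{2c_0}{n-3}h^{-(n-3)}=k$. Now I would compare with Theorem~\ref{d-flat}: for the $(n-1)$-dimensional warped product $W^{n-1}=\mathbb R\times_h N^{n-2}$, $g_W=ds^2+h^2g_1$, equation \eqref{h} with zero scalar curvature becomes $h''=c_0h^{-(n-2)}$, equation \eqref{hk} with zero scalar curvature becomes exactly the first integral just found (with $(N_1,g_1)$ Einstein of the required constant $(n-3)k$), and \eqref{fh}, $h'f'-fh''=0$, is immediate from $f=ch'$. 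Equivalently, since $W^{n-1}$ has a single fiber eigenvalue $\lambda_a$ of multiplicity $n-2$ besides the one of $\nabla f$, Theorem~\ref{same} already tells us that its $D$-tensor vanishes.

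Finally I would install the product splitting. From \eqref{3.4} with $\xi_\alpha=0$ and \eqref{3.7} the vector field $\partial_t$ is parallel, so locally $M=W^{n-1}\times\mathbb R$ with $g=g_W+dt^2$; because $f$ depends only on $s$, both $\mathrm{Hess}_M f$ and $\mathrm{Ric}_M$ restrict on the $W$-directions to $\mathrm{Hess}_W f$ and $\mathrm{Ric}_W$, and since $R=0$ the static equation \eqref{static} on $M$ restricts to $\mathrm{Hess}_W f=f\,\mathrm{Ric}_W$, which is exactly the $(n-1)$-dimensional vacuum static equation with zero scalar curvature. Thus $(W^{n-1},g_W,f)$ is an $(n-1)$-dimensional $D$-flat vacuum static space of zero scalar curvature with $f=ch'$, as claimed. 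I expect the only genuinely delicate points to be the bookkeeping in the last two paragraphs — keeping track of the dimension-shifted normalizations in \eqref{static}, \eqref{h} and \eqref{hk} and matching the Einstein constant of $g_1$ — together with the non-degeneracy check $c_0\neq 0$; the remaining identities are a direct consequence of Theorem~\ref{mulwar}.
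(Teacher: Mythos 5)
Your proposal follows essentially the same route as the paper's own argument for this subcase: deduce $R=0$ from \eqref{4.4}, obtain $f=ch'$ and $h''=c_0h^{-(n-2)}$ with $c_0\neq 0$ from \eqref{4.1}--\eqref{4.3}, derive the first integral $h'^2+\tfrac{2c_0}{n-3}h^{-(n-3)}=k$, and match these with \eqref{h} and \eqref{hk} for the $(n-1)$-dimensional $D$-flat factor. The computations check out, and your extra care in verifying that the static equation restricts correctly to the $W$-factor and that $\partial_t$ is parallel only makes explicit what the paper leaves implicit.
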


\subsection{The multiplicities of two Ricci-eigenfunctions $\lambda_a$ and $\lambda_\alpha$ are more than one}	
In this subsection, we will study the multiplicities of two Ricci eigenfunctions 
$\lambda_a$ and $\lambda_\alpha$ are more than one
in some neighborhood $U$ of a point $p$ in $M_{Ric} \cap \{ \nabla f \neq 0  \}$.
We have the local classification results, 
which form type {\rm (iv)} of Theorem \ref{complete} for $3\leq r\leq n-2$.
	
For this case $r_1,~r_2\geq2$,  $Y=0$ and $X\neq0$.
 From $f'\xi_i=f(\xi'_i+\xi^2_i)$, we have
\[
	 f^{'} \frac{h^{'}}{h} =f'X=f(X'+X^2)= f\frac{h^{''}}{h},
\]
 and $ch^{'} =f$ for some constant $c \neq 0$.
By using of equations \eqref{4.1} and \eqref{4.3}, we see
\[
	 ch^{'''}=f^{''}=-f\left( r_1(X'+X^2)+\frac{R}{n-1}\right) 
	 =-ch'\left( r_1\frac{h^{''}}{h}+\frac{R}{n-1}\right).
\]
for some constant $c_0$, and
\begin{equation} \label{5.1}
	 h^{''}+\frac{R}{(n-1)(r_1+1)}h=c_0h^{-r_1}.
\end{equation}
From \eqref{4.1} and \eqref{4.3}, we see
\[
	 0=2(X'+X^2)+(r_1-1)(X^2-\frac{k}{h_1^2})+\frac{R}{n-1}
	 = 2\frac{h^{''}}{h}+(r_1-1)\frac{h'^2-k}{h^2}+\frac{R}{n-1},
\]
which implies
\[
 2h''+(r_1-1)(h'^2-k)+\frac{R}{n-1}=0.
\]
Putting \eqref{5.1} into the above equation, we get 
\begin{equation} \label{5.2}
	 (h^{'})^2 + \frac{2c_0}{r_1-1}h^{-(r_1-1)} + \frac{R}{(n-1)(r_1+1)}h^2 =k.
\end{equation}
	 
Now, we consider the manifold $W^{r_1+1}=\mathbb{R}^{1}\times N^{r_1}_1$
with $\bar{g}=ds^2 + h^2{g}_1$. 
The scalar curvature is given by  $\bar{R}=R-r_2\lambda_\alpha=\frac{r_1}{n-1}R$.
From \eqref{5.1} and \eqref{5.2}, we have respectively,
\[
h^{''}+\frac{\bar{R}}{r_1(r_1+1)}h=c_0h^{-r_1}
\]
and
\[
(h^{'})^2 + \frac{2c_0}{r_1-1}h^{-(r_1-1)} + \frac{\bar{R}}{r_1(r_1+1)}h^2 =k.
\]
Moreover, $f=ch^{'}$.
It is now easy to see that $( W^{r_1+1},\bar{g})$ is $D$-flat vacuum static space explained in Section \ref{prevss}.
	
	\medskip 
Consequently, we have the following conclusion in this subcase.
\begin{theorem}\label{twodis}
Let $(M^n, g, f)$, $(n\geq 4)$, be an $n$-dimensional vacuum static space with harmonic curvature satisfying \eqref{static}.
Assume locally the metric is given by 
 $g=ds^2+h^2(s){g}_1+g_2$, where ${g}_1$ and $g_2$ are both Einstein metrics.
Then $(M^n, g)$ is locally isometric to a domain in $( W^{r_1+1}\times N^{n-1-r_1}_2, g=\bar{g}+g_2 )$,
where $2\leq r_1\leq n-3$, $( W^{r_1+1},\bar{g})$ is an $(r_1+1)$-dimensional $D$-flat vacuum static space and $ \left( N^{n-1-r_1}_2,g_2\right) $ is an Einstein manifold of the Einstein constant $\frac{R}{n-1}$.
\end{theorem}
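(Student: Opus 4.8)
The plan is to push the integrability conditions of Theorem~\ref{mulwar} and Lemma~\ref{lemma4.1} through for this configuration, extract a closed ODE system for the warping function $h$, and then recognize that system — together with the relation $f=ch'$ — as precisely the one characterizing $D$-flat vacuum static spaces in Theorem~\ref{d-flat}.

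First I would record the structural consequences of the present subcase. By Lemma~\ref{lemma5.1}, after relabeling we may take $Y=\xi_\alpha=0$ and $X=\xi_a\neq 0$, so $h_2$ is constant and Theorem~\ref{mulwar} exhibits the metric locally as the Riemannian product $g=ds^2+h^2(s)g_1+g_2$ with $h:=h_1$ nonconstant and $g_1,g_2$ Einstein; thus $M$ is a domain in $W^{r_1+1}\times N^{n-1-r_1}_2$, where $W^{r_1+1}=\mathbb{R}\times N_1^{r_1}$ carries $\bar g=ds^2+h^2g_1$. The assumptions $r_1\ge 2$ and $r_2=n-1-r_1\ge 2$ give the range $2\le r_1\le n-3$. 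The key normalization is $f=ch'$ for a nonzero constant $c$: combining \eqref{3.11} and \eqref{3.14} one obtains $f\xi_i'=f'\xi_i-f\xi_i^2$, and substituting $\xi_a=h'/h$ gives $f'h'=fh''$, i.e.\ $(f/h')'=0$ on the connected neighborhood; nonconstancy of $f$ forces $c\neq 0$, and in particular $h'$ is nowhere zero there.

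Next I would derive the two ODEs for $h$. Plugging $f=ch'$ and the identity $\sum_{i=2}^n(\xi_i'+\xi_i^2)=r_1(X'+X^2)=r_1h''/h$ (valid since $\xi_\alpha=0$ has multiplicity $r_2$) into \eqref{4.2} gives $h'''+r_1h'h''/h+\frac{R}{n-1}h'=0$; after multiplying by $h^{r_1}$ the left-hand side becomes an exact derivative, so it integrates once to \eqref{5.1}. Then equating the two expressions for $\lambda_a$ in \eqref{4.1} and \eqref{4.3}, using $\sum_{i=2}^n\xi_i=r_1X$ and $X=h'/h$, yields $2hh''+(r_1-1)(h'^2-k)+\frac{R}{n-1}h^2=0$; eliminating $h''$ with \eqref{5.1} and dividing by $r_1-1\ (\neq 0$ as $r_1\ge 2)$ produces the first integral \eqref{5.2}. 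Moreover, \eqref{4.1} (equivalently \eqref{4.4}) with $\xi_\alpha=0$ gives at once $\lambda_\alpha=\frac{R}{n-1}$, so $(N_2,g_2)$ is Einstein with constant $\frac{R}{n-1}$, as claimed.

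Finally I would transfer everything to $W^{r_1+1}$. Since $M=W\times N_2$ is a Riemannian product with $N_2$ of Ricci-eigenvalue $\lambda_\alpha$ of multiplicity $r_2$, its scalar curvature satisfies $\bar R=R-r_2\lambda_\alpha=\frac{r_1}{n-1}R$; the crucial point is that this yields $\frac{\bar R}{r_1}=\frac{R}{n-1}$, i.e.\ the Einstein deficit $\frac{R}{n-1}$ of \eqref{static} on $M$ equals the deficit $\frac{\bar R}{\dim W-1}$ appropriate to the $(r_1+1)$-dimensional factor $W$. Because $f$ depends only on $s$, restricting \eqref{static} to the $W$-directions shows $Hess_{\bar g}f=f({\rm Ric}_{\bar g}-\frac{\bar R}{r_1}\bar g)$, so $(W^{r_1+1},\bar g,f)$ is itself a vacuum static space; rewriting \eqref{5.1}, \eqref{5.2} in terms of $\bar R$ shows $h$ satisfies exactly \eqref{h} and \eqref{hk} on $W$, while $f=ch'$ satisfies \eqref{fh}. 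Since $\bar g=ds^2+h^2g_1$ is a warped product over the Einstein fiber $(N_1,g_1)$ with $h$ nonconstant, the remark following Theorem~\ref{same} (equivalently, the structure theory behind Theorem~\ref{d-flat}) identifies $(W^{r_1+1},\bar g,f)$ as a $D$-flat vacuum static space; together with $(N_2,g_2)$ being Einstein of constant $\frac{R}{n-1}$, this gives the asserted local isometry. I expect no conceptual obstacle — the argument is careful bookkeeping of the integrability conditions — but the one step that genuinely needs attention is this dimensional normalization: one must check that the deficit $\frac{R}{n-1}$ in \eqref{static} on $M$ agrees with $\frac{\bar R}{\dim W-1}$ on the factor $W$, which is precisely the identity $\bar R=\frac{r_1}{n-1}R$; without it the restriction of \eqref{static} to $W$ would not be a vacuum static equation and the reduction to Theorem~\ref{d-flat} would break down.
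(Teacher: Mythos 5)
Your proposal follows essentially the same route as the paper's own argument: use Lemma \ref{lemma5.1} to set $Y=0$, derive $f=ch'$ from $f'\xi_i=f(\xi_i'+\xi_i^2)$, integrate to the two ODEs \eqref{5.1} and \eqref{5.2}, compute $\bar R=R-r_2\lambda_\alpha=\frac{r_1}{n-1}R$, and match the resulting system with \eqref{h}, \eqref{hk} and \eqref{fh} to identify $(W^{r_1+1},\bar g,f)$ as $D$-flat. Your explicit check that $\frac{\bar R}{r_1}=\frac{R}{n-1}$, so that \eqref{static} restricts to the correct vacuum static equation on the factor $W$, is a point the paper leaves implicit but is the same computation; the proof is correct.
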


\smallskip

Finally, from Theorems \ref{same}, \ref{d-flat}, \ref{twoone1}, \ref{twoone2} and \ref{twodis}, it follows that among types {\rm (i)}-{\rm (iv)} Theorem \ref{complete}, each type is different from the other three types.
Therefore, Theorem \ref{complete} holds from continuity argument of complete Riemannian metrics.
 
\bigskip
\noindent {\bf Acknowledgments.}
This work was completed while the author was visiting Lehigh University from August, 2019 to August, 2020. 
She would like to thank her advisor Professor Huai-Dong Cao  
for his invaluable guidance, constant encouragement and support.
She is grateful to her advisors Professor Yu Zheng and Professor Zhen Guo for their constant encouragement and support.
She also would like to thank Junming Xie, Jiangtao Yu, and other members of the geometry group at Lehigh for their interest, helpful discussions, and suggestions during the preparation of this paper. 
She also would like to thank the China Scholarship Council (No: 201906140158)
for the financial support, and the Department of Mathematics at Lehigh University for hospitality and for providing a great environment for research.

\end{document}